\documentclass{article}

\usepackage{amssymb,amsmath, amsthm}

\usepackage{url}

\usepackage{setspace} 
\usepackage{graphicx} 
\usepackage{color}  
\usepackage{pifont}
\usepackage[mathscr]{euscript}

\usepackage[T1]{fontenc}
\usepackage{epigraph} 
\usepackage[bottom]{footmisc}

\usepackage[lighttt]{lmodern}
\usepackage{caption}
\usepackage{booktabs} 
\usepackage{geometry}

\usepackage{titlesec}

\usepackage{tikz-cd}

\newtheorem{definition}{Definition}[section]
\newtheorem{proposition}{Proposition}[section]
\newtheorem{theorem}{Theorem}[section]
\newtheorem{corollary}{Corollary}[section]
\newtheorem{lemma}{Lemma}[section]
\newtheorem{example}{Example}[section]

\def\indep{\mathrel{\raise0.2ex\hbox{\ooalign{\hidewidth$\vert$\hidewidth\cr\raise-0.9ex\hbox{$\smile$}}}}}

\begin{document}

	\begin{center}
		\doublespacing	{\textbf{\huge{When are Two Subgroups Independent?}}}	\\
		
		\vspace{0.8cm}
		Alexa Gopaulsingh\\
		
		\vspace{1mm}
		\singlespacing{	Department of Logic, ELTE}\\
		Budapest, Hungary\\
		
	\end{center}
	
	\vspace{1cm}

	\begin{center}
		\noindent \textbf{Abstract}
	\end{center}
	
	{\small In \cite{MainC},  Rosenmann and Ventura essentially asked "What is the right definition of dependence of subgroups for general groups?".  Here we aim to answer this question. We consider a definition of subgroup independence which is a special case of the category-theoretic one introduced in \cite{Z1}. It is that: Two subgroups of a group are \textbf{independent} if and only if any  two endomorphisms, one acting on each subgroup, can be extended to an endomorphism of the group generated by these subgroups.  This definition helps to illuminate that the usual condition of almost disjointness  of subgroups (two subgroups  $A$ and $B$  are almost disjoint if and only if $A \cap B = \{e\}$, where $e$ is the identity element) is not enough to force independence and here we find necessary and (different) sufficient conditions for subgroup independence. The aim of this note is to introduce this general notion of subgroup independence to the group theory community and to pose the open question of its characterisation. We present the partial results known up to this point.  Moreover, we use the progress made so far to give a heuristic algorithm that decides subgroup independence for many cases.}
	
	\vspace{5mm} 
	
	\noindent \textbf{AMS Subject Classification:} 20-02, 20A10, 20J15
	
	\noindent \textbf{Keywords:}  Subgroup Independence $\cdot$ Subalgebra  Independence $\cdot$ Categorical Independence
	\\
	
	\begin{section}{Introduction}

		Independence notions have long been of interest in many fields. For example, they are important in physics in quantum field theory, see \cite{S1}. Here, many unequal notions of independence were introduced.  For a review and comparison of these, see \cite{S2}.  In \cite{MainC},  Rosenmann and  Ventura asked for a definition of independence which generalises the notion  first used by Rosenmann in \cite{MainC2} for free groups.  In \cite{Z1}, a general approach and a category-theoretic definition of subalgebra independence of structures was introduced. It generalises and continues work done in \cite{M1} and \cite{Z2} to give a categorical definition of independence as subobject morphism coexistence.  For the category of  Groups, this essentially reduces to: Two subgroups $A$ and $B$ of a group, are independent if and only if  any endomorphisms, $\alpha$ on $A$ and $\beta$ on $B$, can be extended to an endomorphism of the minimal group generated by $A$ and $B$ (their join). This extension, if it exists must be unique, see \cite{Z1}. Here, it uses the fact that in the categories of Sets, Boolean Algebras, Vector Spaces and Abelian Groups, the co-product of two algebras in these categories are the join of these algebras. This results in the independence definitions evaluating to the expected notions in these categories: For the category of  Sets, subset independence is disjointness; for Vector Spaces, subspace independence  is linear independence; for Boolean Algebras, subalgebra independence is logical independence and for the category of Abelian groups, subgroup independence is almost disjointness. However, surprisingly, for the category of Groups the situation was found to be mysterious and not equivalent to the usual requirement of almost disjoint subgroups. In this category, the internal co-product of two subgroups is not necessarily isomorphic to their join.  We found that it is possible for a pair of subgroups to satisfy the usual requirement of almost disjointness and yet still their endomorphisms affect each other. That is, there exists endomorphisms of almost disjoint subgroups of a group which do not have an extension in their join, see Example \ref{E1}. This category-theoretic definition helped to shed light on this situation and we see that we have to  be more careful and require more than simply almost disjointness to determine if the endomorphisms of two subgroups influence each other. Informally, we found that part of the reason for this is that even if  two subgroups $A$ and $B$ are such that $A \cap B = \{e\}$ (i.e they are almost disjoint), it still may be that case that $A$ contains a non-trivial conjugate of an element $B$ (or a product of $B$'s conjugates). Homomorphism mappings on elements restrict what their conjugates can be mapped to and hence $A$ can still affect $B$ in this way. This prompts us to define a notion of subgroup separateness which states that not only is $A$ almost disjoint from $B$ but it must be almost disjoint from the minimal normal group containing $B$ in their join. We found that requiring both $A$ to be separate from $B$  and  for $B$ to be separate from $A$, is indeed necessary for subgroup independence. However, we will show that even this is not sufficient. Additionally, we will see that if we were to take a stronger version of this separateness for $A$ and $B$ which states that their minimal normal subgroups in their join must be almost disjoint, then this is too strong to characterise subgroup independence. Though we have gotten closer, the exact sweet spot of this conjugate influence is yet to be found and we motivate it as an interesting open problem.
		
		Moreover, this article contains the progress so far, for determining when two subgroups are independent. The necessary and (separately) sufficient conditions in the following sections can help us to decide independence or dependence of subgroups for many cases. We have used these results to condense a heuristic algorithm given in Section \ref{S5} which can be useful to scientists or anyone who may need to detect subgroup dependence. \\

	\end{section}

	\begin{section}{Subgroup Independence}
		
		We start by giving the notations and conventions used in this article and we also introduce the subgroup separateness and subgroup independence definitions which we investigate.
		
		\begin{subsection}{\large Definitions and Notation}	\normalfont
			
			\noindent -- Let $A$ be a subset of a group $G$. Then, we denote the \textbf{subgroup generated by $A$} as \textbf{$\langle A\rangle $}.\\
			
			\noindent -- Let $A$ be a group. We will denote the \textbf{identity} of it by $e$.\\
			
			\noindent  -- $ X \cdot Y = \{x * y \  | x \in X \text{and} \ y \in Y\}$ for sets $X$ and $Y$ of a group $(G, *)$.\\
			
			\noindent -- The order of an element $g$ in a group $G$, will be denoted by $|g|$.\\
			
			\noindent -- Let a group $G$ be isomorphic to a group $H$. Then, we write $G \cong H$.\\

			\noindent -- Two subgroups, $A$ and $B$ are said to be \textbf{almost disjoint} iff $A \cap B = \{e\}$.\\
			
			\noindent -- Let $A$ and $B$ be subgroups of a group. Then, the group generated by their union, $\langle A \cup B\rangle $, is called the \textbf{join} of $A$ and $B$.\\
			
			\noindent --Let $A$ be a subgroup of a group $G$. We define $Conj(A)_{G}$ to be the\textbf{ set of conjugates} of $A$ in $G$.\\
			
			\noindent -- Let $A$ be a  group. Let \textbf{$\langle Conj(A)_G\rangle $} be \textbf{the minimal normal subgroup containing $A$ in $G$}. We  denote  this by $N_{A_G}$.\\

			\noindent -- In the following, we will always assume that $A$ and $B$ are subgroups of a common group and so it makes sense to consider their join. Also, as we will always be considering the $N_{A_{ \langle A \cup B\rangle }}$, we will simply write {\textbf{$N_A$}}.\\

			\noindent -- Let $(A, B)$ be a pair of groups. The endomorphisms $\alpha$ and $\beta$ of $A$ and $B$ respectively can be denoted by $(\alpha, \beta)$. If  $\alpha$ and $\beta$ can (cannot) be extended to an endomorphism of the $\langle A \cup B\rangle $, then we can say that $(A, B)$ is (is not) \textbf{compatible} with $(\alpha, \beta)$.\\
			
			\noindent --The identity and trivial endomorphisms will be of special interest to us and so we will use the abbreviations of \textbf{id} and\textbf{ triv} respectively to refer to these morphisms, where for any group $G$ and $g \in G$,  $\text{id}: G \rightarrow G$ is defined by $\text{id}(g) = g$ and $\text{triv}: G \rightarrow G$ is defined by $\text{triv}(g) = e$. For example, if we want to say that  the identity endomorphism  on $A$ is compatible with the trivial endomorphism on $B$, we will write that \textbf{$(id, triv)$ is compatible with $(A, B)$}.\\

			\noindent -- Cycle multiplication: In this article, there will be some examples using permutation groups. Here, we use the convention of considering the cycles as composition of functions and so we apply the multiplication of cycles from right to left.\\

			{\large\noindent \textbf{Separatedness Definitions:}}\\
			
			\noindent -- Suppose that $a  \in A$. 
			Then,  \textbf{an element $a$} is said to be \textbf{$B$-separated} iff  $a  \not\in  N_B $ or $a = e$.\\
			
			\noindent -- \textbf{A set $A$,} is called \textbf{$B$-separated }iff  $ \forall  \ a  \in A, a$ is  $B$-separated iff $ \forall  a \not= e \in A, \ a \not\in  N_B $. That is, iff $A\  \cap  N_B  = \{e\}$. \\

			\noindent -- \textbf{A pair (a, b)} $\in A \times B$  is said to be \textbf{separated} iff  and $a \not\in N_{\langle b\rangle} $ or $a = e$ and $b \not\in N_{\langle a \rangle} $ or $b = e$.\\
			
			\noindent	Next, we state the subgroup independence definition which is a special case of the subalgebra independence definition given in \cite{Z1}.\\

			{\large\noindent\textbf{Subgroup Independence Definition:} }\\
			
			\noindent Let \textbf{$(A, B)$ be a pair of subgroups}. If any pair endomorphisms ($\alpha$ ,$\beta$) of $(A, B)$ can be extended to a endomomorphism of their join $\langle A \cup B\rangle $, then we say that $A$ and $B$ are \textbf{independent}, which is denoted by $A \indep B$. Otherwise, we say that $A$ and $B$ are \textbf{dependent.}

		\end{subsection}
		
		\begin{subsection}{Main Results about Subgroup Independence}
			
			Next, we develop the main results obtained so far about subgroup independence. We will see  how the notion of subgroup separateness influences independence and we will obtain characterisations for subgroup separateness in this and the following sections.
			
			\begin{proposition}\label{P2.1}
				
				Let $A$ and $B$ be subgroups of a group. For any endomorphisms $\alpha$ and $\beta$ of $A$ and $B$ respectively, if their extension to an endomorphism exists in their join,   $\gamma$ say, then for any $\prod_{i= 1}^{n}a_ib_i \in \langle A \cup B\rangle $, it is the case that $ \gamma(\prod_{i=1}^{n}a_ib_i) =\prod_{i=1}^{n} \alpha(a_i)\beta(b_i)$.\\
			\end{proposition}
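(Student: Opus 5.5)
The plan is to prove the statement in three short steps, using only the definition of an extension and the fact that $\gamma$ is a homomorphism.

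\emph{Step 1: every element of the join has the stated form.} The join $\langle A \cup B\rangle$ consists of all finite products of elements of $A \cup B$ and their inverses. Since $A$ and $B$ are subgroups, inverses of elements of $A$ (resp.\ $B$) lie in $A$ (resp.\ $B$). So every element is a finite word whose letters alternate between $A$ and $B$. Inserting identities $e \in A$ or $e \in B$ where needed, such a word can be written as $\prod_{i=1}^{n} a_i b_i$ with $a_i \in A$ and $b_i \in B$. Hence the expression in the statement covers every element of the join. The identity $\gamma(\prod a_i b_i) = \prod \alpha(a_i)\beta(b_i)$ is then to be checked for every such representation, not just a chosen one.

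\emph{Step 2: induction on $n$.} Let $\gamma$ be an endomorphism of $\langle A \cup B\rangle$ extending $\alpha$ and $\beta$, so that $\gamma|_A = \alpha$ and $\gamma|_B = \beta$. Since $\gamma$ is a homomorphism, induction on $n$ gives
\[
\gamma\Bigl(\prod_{i=1}^{n} a_i b_i\Bigr) = \prod_{i=1}^{n} \gamma(a_i)\gamma(b_i).
\]
For $n = 1$ this is just $\gamma(a_1 b_1) = \gamma(a_1)\gamma(b_1)$. For the inductive step, split off the last factor $a_n b_n$ and apply multiplicativity once more. Substituting $\gamma(a_i) = \alpha(a_i)$ and $\gamma(b_i) = \beta(b_i)$ gives the claimed formula.

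\emph{Step 3: the formula is independent of the representation.} The only point needing care is that the product representation of an element is not unique, so one might worry whether the right-hand side is well defined. No separate argument is needed here. The left-hand side depends only on the element, because $\gamma$ is a function. So the computation in Step 2 shows that, whenever an extension exists, every representation yields the same value $\prod \alpha(a_i)\beta(b_i)$. This also gives the uniqueness of the extension mentioned in the introduction. I therefore expect no real obstacle: the statement is essentially the homomorphism property combined with the description of the join as the set of words in $A \cup B$.
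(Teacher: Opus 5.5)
Your proposal is correct and takes the same approach as the paper, which simply says the result is immediate from the notion of extension. That is exactly your Step 2: multiplicativity of $\gamma$ together with $\gamma|_A=\alpha$ and $\gamma|_B=\beta$. Your Steps 1 and 3 are harmless extra detail: Step 1 is not needed for the statement as posed, and Step 3 correctly notes why the right-hand side is well defined and why the extension is unique.
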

			
			\begin{proof}
				Immediate from the notion of extension of mappings.
			\end{proof}

			\begin{lemma}\label{L2.1}
				$(id, triv)$ is compatible for $(A, B) \implies$ $A$
				is $B$-separated.
			\end{lemma}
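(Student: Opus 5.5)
The plan is to argue by contraposition, via the kernel of the extension. Suppose $(id, triv)$ is compatible with $(A,B)$. Then there is an endomorphism $\gamma$ of $\langle A \cup B\rangle$ with $\gamma|_A = \text{id}$ and $\gamma|_B = \text{triv}$. I will show that $A \cap N_B = \{e\}$, which is exactly the definition of $A$ being $B$-separated.

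The key observation is that $\ker \gamma$ is a normal subgroup of the join $\langle A \cup B\rangle$, and it contains $B$ because $\gamma(b) = e$ for every $b \in B$. Since $N_B = \langle Conj(B)_{\langle A\cup B\rangle}\rangle$ is the minimal normal subgroup of the join containing $B$, it follows that $N_B \subseteq \ker\gamma$. If one prefers to avoid minimality, the same conclusion comes elementwise: for $g$ in the join and $b \in B$ we have $\gamma(gbg^{-1}) = \gamma(g)\,e\,\gamma(g)^{-1} = e$. So every conjugate of an element of $B$ lies in $\ker\gamma$, hence so does every product of such conjugates and their inverses, and that is all of $N_B$.

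Now take $a \in A \cap N_B$. On one hand $\gamma(a) = a$, because $\gamma$ extends $\text{id}$ on $A$. On the other hand $\gamma(a) = e$, because $a \in N_B \subseteq \ker\gamma$. Hence $a = e$, so $A \cap N_B = \{e\}$, and $A$ is $B$-separated.

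I do not expect a genuine obstacle. The one point that needs care is that normality is taken inside the join $\langle A\cup B\rangle$, which is where $\gamma$ is defined and where $N_B$ is formed by the paper's convention. Proposition \ref{P2.1} is not needed beyond the basic fact that an extension agrees with $\alpha$ on $A$ and with $\beta$ on $B$.
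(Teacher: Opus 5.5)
Your proof is correct and is essentially the paper's argument: the paper also shows that any extension of $(\text{id},\text{triv})$ must send every conjugate of an element of $B$, and hence all of $N_B$, to $e$, which forces any $a \in A \cap N_B$ to equal $e$. The differences are only in presentation. The paper phrases this as a contrapositive, whereas yours, despite the opening word ``contraposition'', is a direct argument. Your observation that $N_B \subseteq \ker\gamma$ is a cleaner packaging of the same step.
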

			
			\begin{proof}
				We will use proof by the contrapositive. Assume that $A$ is not $B$-separated. Then $\exists \  a \in A$ such that $a$ is not $B$- separated. This means that $a \not = e$  and  $a \in  N_B $. Consider $(\alpha, \beta) =(id, triv)$ for $(A, B)$.  We observe that, all elements in $Conj(B)$ must be mapped to $e$ in a homomorphism extension of $\beta$ since in any homomorphism, conjugate elements in the domain must be mapped to conjugate elements in the range and $e$ is conjugate only to itself. It follows that every element of $N_B $ must be mapped to $e$ since these elements can be expressed as a product of elements in $Conj(B)$. Since $a \in  N_B $, taking the triv homomorphism on $B$ implies that in the extension homomorphism, $a$ must be mapped to the identity in the join. However, by the id homomorphism, $a$ should be mapped to itself in the join. This implies that $a  = e$ which is a contradiction. Hence, $(id, triv)$ is not compatible for $(A, B)$. 
			\end{proof}

			\noindent Next, we use this lemma to show that a stronger condition than almost disjointness is required for independence.\\

			\begin{theorem}\label{mainTheorem}
				$A \indep B \implies$   $A\  \cap N_B  = \{e\}$ and  $B\  \cap N_A  = \{e\}$.\\
				
				\noindent Equivalently, $(A, B)$ is an independent pair $ \implies$ $A$ is $B$-separated and $B$ is $A$-separated.
			\end{theorem}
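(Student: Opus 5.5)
The plan is to derive the theorem directly from Lemma \ref{L2.1}, applied twice: once to the pair $(A,B)$ and once to the pair with the roles of the two subgroups swapped. Independence quantifies over \emph{all} pairs of endomorphisms, so in particular it covers the specific pairs $(\text{id}, \text{triv})$ and $(\text{triv}, \text{id})$. Those are exactly the pairs the lemma needs.

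First, assume $A \indep B$. By the Subgroup Independence Definition, every pair $(\alpha,\beta)$ of endomorphisms of $(A,B)$ extends to an endomorphism of $\langle A \cup B\rangle$. Taking $\alpha = \text{id}$ on $A$ and $\beta = \text{triv}$ on $B$ shows that $(\text{id},\text{triv})$ is compatible with $(A,B)$. Lemma \ref{L2.1} then gives that $A$ is $B$-separated. By the Separatedness Definitions, this means $A \cap N_B = \{e\}$.

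Second, to get $B \cap N_A = \{e\}$, take the pair $(\text{triv}, \text{id})$ on $(A,B)$. Independence again yields an extension $\gamma$ to the join. Since $\langle A\cup B\rangle = \langle B \cup A\rangle$, this $\gamma$ is equally an extension of $(\text{id},\text{triv})$ for the ordered pair $(B,A)$. Hence $(\text{id},\text{triv})$ is compatible with $(B,A)$. Also, $N_A$ is taken in the same join, so Lemma \ref{L2.1} applied to $(B,A)$ gives that $B$ is $A$-separated, i.e. $B \cap N_A = \{e\}$.

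The only step needing care is this symmetry. Lemma \ref{L2.1} is stated for an ordered pair, so I will note explicitly that compatibility, the join, and the normal closures $N_A$, $N_B$ (all computed in $\langle A \cup B\rangle$) are unchanged when $A$ and $B$ are swapped. Given that remark, the lemma's proof applies verbatim with the roles exchanged. The ``equivalently'' clause needs no separate argument: it is just the definition of $X$ being $Y$-separated, namely $X \cap N_Y = \{e\}$.
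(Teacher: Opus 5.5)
Your proposal is correct and follows the paper's proof: Lemma \ref{L2.1} applied to $(\text{id},\text{triv})$ gives $A\cap N_B=\{e\}$, and the same lemma applied to $(\text{triv},\text{id})$ gives $B\cap N_A=\{e\}$. You argue directly where the paper argues by contrapositive, and your remark that the join and the normal closures are unchanged when $A$ and $B$ are swapped just makes the paper's ``Similarly'' explicit.
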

			
			\begin{proof}
				Applying Lemma \ref{L2.1}, we get that if $A$ is not $B$-separated that $(id, triv)$ is not compatible with $(A, B) \implies A \not \indep B$. Similarly, if $B$ is not $A$-separated then (triv, id) is not compatible with $(A, B)$ and so the pair is dependent.
			\end{proof}
			
			\noindent This gives us the (expected) corollary that almost disjointness is necessary for independence.
			
			\begin{corollary} \label{C2.1}
				$A \indep B \implies A \cap B = \{e\}$
			\end{corollary}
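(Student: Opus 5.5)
The plan is to derive Corollary \ref{C2.1} directly from Theorem \ref{mainTheorem}, using only the trivial containment $B \subseteq N_B$. Suppose $A \indep B$. By Theorem \ref{mainTheorem}, $A$ is $B$-separated, that is, $A \cap N_B = \{e\}$.

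Next, recall that $N_B = \langle Conj(B)_{\langle A \cup B\rangle}\rangle$. Every $b \in B$ is its own conjugate by $e$, so $B \subseteq Conj(B) \subseteq N_B$. Intersecting with $A$ gives
\[
\{e\} \subseteq A \cap B \subseteq A \cap N_B = \{e\},
\]
where the first inclusion holds because both $A$ and $B$ are subgroups. Hence $A \cap B = \{e\}$.

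There is no real obstacle here. The only point to state explicitly is that conjugation by the identity shows $B \subseteq N_B$. One could instead argue directly from Lemma \ref{L2.1}: if $e \neq a \in A \cap B$, then under $(id, triv)$ the element $a$ would have to map both to $a$ and to $e$. That is just the special case of the same argument, so citing Theorem \ref{mainTheorem} suffices.
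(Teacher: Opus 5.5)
Your proof is correct and follows the paper's approach: the paper says the corollary is immediate from Theorem \ref{mainTheorem}. You have made explicit the one step the paper leaves unstated, namely $B \subseteq N_B$, which gives $A \cap B \subseteq A \cap N_B = \{e\}$.
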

			
			\begin{proof}
				Immediate from the theorem. 
			\end{proof}	
			
			\begin{corollary}\label{useful-form}
				If there is a non-separated pair $(a, b)$ in $A \times B \implies A \not \indep B$.
			\end{corollary}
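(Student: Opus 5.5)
The plan is to reduce the corollary to Theorem \ref{mainTheorem} by showing that a non-separated pair forces $A$ to fail to be $B$-separated or $B$ to fail to be $A$-separated. First I will pin down the definition. The pair $(a,b)\in A\times B$ is separated iff both \emph{($a\notin N_{\langle b\rangle}$ or $a=e$)} and \emph{($b\notin N_{\langle a\rangle}$ or $b=e$)} hold. So $(a,b)$ is non-separated iff either $a\neq e$ and $a\in N_{\langle b\rangle}$, or $b\neq e$ and $b\in N_{\langle a\rangle}$. By symmetry (Theorem \ref{mainTheorem} is symmetric in $A$ and $B$), it suffices to treat the first case.

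In that case the key step is the inclusion $N_{\langle b\rangle}\subseteq N_B$. The one point to be careful about is the ambient group. Throughout the paper, $N_X$ means the normal closure of $X$ in the join $\langle A\cup B\rangle$, and I will read $N_{\langle b\rangle}$ the same way. Since $\langle b\rangle\subseteq B$, every conjugate of an element of $\langle b\rangle$ by an element of the join is a conjugate of an element of $B$. Hence $Conj(\langle b\rangle)\subseteq Conj(B)$, and so the subgroups they generate satisfy $N_{\langle b\rangle}\subseteq N_B$. Alternatively, $N_B$ is a normal subgroup of the join containing $\langle b\rangle$, and $N_{\langle b\rangle}$ is the minimal such subgroup.

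With this inclusion, $a$ is a nontrivial element of $A\cap N_B$, so $A$ is not $B$-separated. Theorem \ref{mainTheorem} (or directly Lemma \ref{L2.1}, via the pair $(id,triv)$) then gives $A\not\indep B$. In the other case we get $e\neq b\in B\cap N_A$, and the same theorem, using the pair $(triv,id)$, again gives dependence.

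I do not expect a real obstacle. The only delicate points are reading the "iff and" in the paper's definition of a separated pair as a conjunction, so that its negation is the disjunction above, and making sure both normal closures are taken in the same group, namely the join.
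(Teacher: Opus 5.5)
Your proof is correct and takes the paper's route: the paper just says the corollary is immediate from Theorem \ref{mainTheorem}. You supply the one implicit step, the inclusion $N_{\langle b\rangle}\subseteq N_B$, which turns a non-separated pair into a nontrivial element of $A\cap N_B$ or of $B\cap N_A$; the inclusion would hold equally if $N_{\langle b\rangle}$ were read as a normal closure inside the smaller group $\langle a,b\rangle$.
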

			
			\begin{proof}
				Immediate from the theorem. 
			\end{proof}

			\noindent The above necessary condition for independence of groups gives us a very useful way to tell if two groups are not independent. Consider the following example which shows that two groups can be dependent despite the fact that they are almost disjoint:
			
			\begin{example} \label{E1}
				\normalfont
				We consider subgroups of $S_3$, the symmetric group on 3 elements defined by; $A = \{e, (12)  \}$ and $B = \{ e,  (13)   \}.$ Now, $\langle A \cup B\rangle  =  S_3$. We see that $(12) \in A$ is conjugate to $ (13) $ in $S_3$. That is, $(12) \in \ \langle N_B \rangle  (= S_3)$ (hence $(id, triv)$ is not compatible with $(A, B)$). Therefore, $((12), (13))$ is not a separated pair and so $A$ and $B$ are not independent groups (notice that this is so, even though $A \cap B = \{e\}$).
			\end{example}
			
			\noindent We now state a  proposition which we will apply that follows from $A$ and $B$  not being independent.
			
			\begin{proposition}\label{notIndep}
				Let $A$ and $B$ be subgroups such that $A \not\indep B$, then $\exists$ homomorphisms $\alpha: A \rightarrow A$ and $\beta: B \rightarrow B$ such that for some $ \prod_{i = 1}^{n} a_ib_i = e \in \langle A \cup B\rangle $, where $a_i \in A$ and $b_i \in B$ for $i \in \{1, \dots n \}$ and it is the case that $ \prod_{i = 1}^{n} \alpha(a_i)\beta(b_i) \not = e.$
			\end{proposition}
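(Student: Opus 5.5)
Since $A \not\indep B$, the definition of independence supplies endomorphisms $\alpha$ of $A$ and $\beta$ of $B$ that admit no extension to an endomorphism of $\langle A \cup B\rangle$. I will argue by contraposition. Suppose that for every word $\prod_{i=1}^{n} a_ib_i$ with $a_i \in A$, $b_i \in B$ which equals $e$ in the join, we also have $\prod_{i=1}^{n}\alpha(a_i)\beta(b_i) = e$. From this I will construct an extension $\gamma$, which contradicts the choice of $(\alpha,\beta)$.

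First I note that every element of $\langle A \cup B\rangle$ can be written as $\prod_{i=1}^{n} a_ib_i$ with $a_i\in A$, $b_i \in B$. Any alternating word in elements of $A$ and $B$ (inverses included, since $A$ and $B$ are subgroups) can be padded with identity factors $e\in A$ or $e \in B$ so that it begins with a letter from $A$ and ends with one from $B$. This is the form already used in Proposition \ref{P2.1}. I then define
\[
\gamma\Bigl(\prod_{i=1}^{n} a_ib_i\Bigr) = \prod_{i=1}^{n}\alpha(a_i)\beta(b_i).
\]

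The main step is well-definedness. Suppose $\prod_{i=1}^{n} a_ib_i = \prod_{j=1}^{m} a'_jb'_j$. Then
\[
\Bigl(\prod_{i=1}^{n} a_ib_i\Bigr)\Bigl(\prod_{j=1}^{m} a'_jb'_j\Bigr)^{-1} = e.
\]
The inverse $b_m'^{-1}a_m'^{-1}\cdots b_1'^{-1}a_1'^{-1}$ is again an alternating word. After inserting identities, the whole left side becomes a word of the form $\prod_k c_kd_k$ with $c_k \in A$, $d_k\in B$, and it equals $e$. The hypothesis therefore gives $\prod_k \alpha(c_k)\beta(d_k) = e$. Because $\alpha$ and $\beta$ are homomorphisms, they respect inverses and send $e$ to $e$, so this last product equals
\[
\Bigl(\prod_{i}\alpha(a_i)\beta(b_i)\Bigr)\Bigl(\prod_j \alpha(a'_j)\beta(b'_j)\Bigr)^{-1}.
\]
Hence the two values assigned to the element agree. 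The bookkeeping with the padding identities is the only delicate part, and it goes through because $\alpha(e)=e$ and $\beta(e)=e$.

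Once $\gamma$ is well defined, it is a homomorphism: a product of two words is computed by concatenating them, again after padding at the junction. It restricts to $\alpha$ on $A$, since $a = a\cdot e$, and to $\beta$ on $B$, since $b = e\cdot b$. So $\gamma$ extends $(\alpha,\beta)$, which is the desired contradiction. Consequently some word $\prod_{i=1}^{n} a_ib_i = e$ must satisfy $\prod_{i=1}^{n}\alpha(a_i)\beta(b_i)\neq e$, which is exactly the statement of Proposition \ref{notIndep}.
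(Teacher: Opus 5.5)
Your proof is correct and uses the same device as the paper: two words $w,w'$ for the same element of $\langle A\cup B\rangle$ with different images become the single word $w w'^{-1}=e$ whose image is not $e$; you simply run this as a contrapositive. Your version is more complete, since you explicitly check that a well-defined word map $\gamma$ is a homomorphism extending $(\alpha,\beta)$ with values in the join (as $\alpha(a_i)\in A$ and $\beta(b_i)\in B$), whereas the paper goes from non-extendability to the existence of such $w,w'$ by citing Proposition~\ref{P2.1}, which only gives the uniqueness direction.
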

			
			\begin{proof}
				Let $A$ and $B$ be subgroups such that $A \not\indep B$, then $\exists$ homomorphisms $\alpha: A \rightarrow A$ and $\beta: B \rightarrow B$ such that for some $\prod_{i = 1}^{l} a_ib_i , \  \prod_{i = 1}^{m} c_id_i  \in \langle A \cup B\rangle $, where $a_i \in A$ and $b_i \in B$ for $i \in \{1, \dots l \}$ and $c_i \in A$ and $d_i \in B$ for $i \in \{1, \dots m \},$ is such that, \\
				
				$\ \ \prod_{i = 1}^{l} a_ib_i  = \prod_{i = 1}^{m} c_id_i $ but  $\prod_{i = 1}^{l} \alpha(a_i)\beta(b_i) \not= \prod_{i = 1}^{m} \alpha(c_i)\beta(d_i) $ by Proposition \ref{P2.1}.  \\ $ \implies \prod_{i = 1}^{l} a_ib_i \prod_{i = 1}^{m} d_i^{-1}c_i^{-1} = e$ and  $ \prod_{i = 1}^{l} \alpha(a_i)\beta(b_i)\prod_{i = 1}^{m} \beta(d_i^{-1})\alpha(c_i^{-1})  \not = e$.\\
				
				\noindent Let $b_{l+1} = b_nd_m^{-1}$, $b_{l+j} = d_{m-(j-1)}^{-1}$ for $j \in \{2, \cdots m-1\}$ and $b_{l+m} = e$. \\
				Let  $a_{l+j} =  c_{m-(j-1)}^{-1}$ for $j \in \{1, \cdots m\}$.\\
				Now take $n= l+m$.\\
				
				\noindent Then, $\prod_{i=1}^{n}a_ib_i= e$ but $\prod_{i = 1}^{n} \alpha(a_i)\beta(b_i) \not= e$.
			\end{proof}
			
			\noindent Next, we obtain a useful equivalence to the condition that $A$ is $B$-separated.
			
			\begin{theorem} \label{T2.2}  
				
				$A$ is $B$-separated $\iff$ $(id, triv)$  is compatible with $(A, B)$.

			\end{theorem}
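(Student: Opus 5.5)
The direction ($\Leftarrow$ of compatibility implying separatedness) is exactly Lemma \ref{L2.1}, so it only remains to show: if $A \cap N_B = \{e\}$, then $(id, triv)$ extends to an endomorphism of $G := \langle A \cup B\rangle$. The natural candidate is the quotient map. Since $N_B$ is the normal closure of $B$ in $G$, I will consider the canonical projection $\pi : G \to G/N_B$.

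The key structural observation is that $G/N_B$ is generated by the images of $A$ and $B$. But $B$ maps to the identity, so $G/N_B = \pi(A)$. Moreover, $\pi|_A : A \to G/N_B$ has kernel $A \cap N_B = \{e\}$ by the separatedness hypothesis. Hence $\pi|_A$ is an isomorphism $A \cong G/N_B$, and this is the step where the hypothesis is really used.

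With that in hand I define
\[
\gamma := \iota_A \circ (\pi|_A)^{-1} \circ \pi : G \to G,
\]
where $\iota_A$ is the inclusion $A \hookrightarrow G$. This $\gamma$ is a composite of homomorphisms, hence an endomorphism of the join. For $a \in A$ we get $\gamma(a) = (\pi|_A)^{-1}(\pi(a)) = a$, and for $b \in B$ we get $\gamma(b) = (\pi|_A)^{-1}(e) = e$. So $\gamma$ extends $(id, triv)$, and therefore $(id, triv)$ is compatible with $(A, B)$.

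The only point needing care, which I expect to be the main (mild) obstacle, is checking surjectivity of $\pi|_A$. Every element of $G$ is a product $\prod a_i b_i$, and its image under $\pi$ is $\pi(\prod a_i)$ because $\pi(b_i) = e$. This gives surjectivity. Equivalently, one can note that $G = A \cdot N_B$ because $N_B$ is normal in $G$.
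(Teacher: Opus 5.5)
Your proof is correct, but it takes a different route from the paper's. The paper proves the $\Rightarrow$ direction by contrapositive. If $(id, triv)$ has no extension, Proposition \ref{notIndep} supplies a relation $\prod_{i=1}^{n} a_ib_i = e$ with $a_1\cdots a_n \neq e$. A nested-conjugation computation then shows that $(a_1b_1\cdots a_nb_n)(a_n^{-1}\cdots a_1^{-1}) = (a_1\cdots a_n)^{-1}$ lies in $N_B$, giving a nontrivial element of $A\cap N_B$. In effect, the paper checks relation by relation that the word map $\prod a_ib_i\mapsto\prod a_i$ is well defined.

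You instead construct the extension directly as the retraction $\iota_A\circ(\pi|_A)^{-1}\circ\pi$. The paper's conjugation computation is the same fact as your observation that $\pi(\prod a_ib_i)=\pi(\prod a_i)$, and separatedness enters as injectivity of $\pi|_A$ rather than as a final contradiction.

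Your version has three advantages. First, $\gamma$ is a composite of homomorphisms, so no well-definedness check is needed and you avoid the index bookkeeping in the proof of Proposition \ref{notIndep}. Second, it actually uses the isomorphism $\langle A\cup B\rangle/N_B\cong A$, induced by $\pi|_A$, which the paper only records afterwards in Lemma \ref{FactoringL}. Third, it generalizes for free: $\iota_A\circ\alpha\circ(\pi|_A)^{-1}\circ\pi$ extends $(\alpha, triv)$ for every endomorphism $\alpha$ of $A$, and the argument exhibits the join as the internal semidirect product $N_B\rtimes A$.

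The paper's approach, for its part, stays inside its word-and-relation framework (Propositions \ref{P2.1} and \ref{notIndep}), which it reuses in Lemma \ref{productsInJoin} and Theorem \ref{normalTheorem}.
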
	
			
			\begin{proof} ``$\Leftarrow$" direction is shown by Lemma \ref{L2.1}.
				
				``$\Rightarrow$" direction. We will show this by the contrapositive. Consider $(id, triv)$ for the pair of subgroups $(A, B)$. Suppose that $(id, triv)$ does not have a homomorphism extension in $\langle A \cup B\rangle .$ Then, by Proposition \ref{notIndep}, $\exists \ \prod_{i = 1}^{n} a_ib_i 
				= e \in \  \langle A \cup B\rangle $ which is such that $\prod_{i = 1}^{n} \text{id}(a_i)\text{triv}(b_i) \not = e$. Using the definitions of these mappings we get that $\prod_{i = 1}^{n}\text{id}(a_i)  = \text{id}(a_1 \dots a_n) \not= e$. Since $\alpha$ is a homomorphism this implies that $a_1 \dots a_n \not= e \implies a_n^{-1} \dots a_1^{-1} \not= e $ and it is clearly in $A$. Call it $a_k$, say.  \\
				
				\noindent Consider now, $ (a_1b_1 \dots a_nb_n)(a_n^{-1} \dots a_1^{-1}) = e \cdot a_k = a_k$.\\ This can be viewed as;\\
				
				$ \ (a_1(b_1(a_2 \dots (a_{n-1}(b_{n-1}(a_nb_na_n^{-1})) a_{n-1}^{-1}) \dots a_2^{-1})a_1^{-1})$.\\
				
				Each term in the brackets is either a conjugate of an element in $N_B $ or a product of elements of $B$ by an element in $N_B $ and so is also in $N_B $. Therefore, $a_k \in \  N_B $ but $a_k \not = e$. This implies that $A \  \cap  N_B  \not = \{e \}$ and $A$ is not $B$-separated. 	
			\end{proof}

			\begin{lemma}\label{productsInJoin}
				If $A$ and $B$ are subgroups such that  $A$ is $B$-separated and $B$ is $A$-separated then, 
				
				$\forall \  \prod_{i=1}^{n}a_ib_i = e \in \ \langle A \cup B\rangle $, it is the case that $\prod_{i=1}^{n}a_i = e$ and $\prod_{i=1}^{n}b_i = e$.
			\end{lemma}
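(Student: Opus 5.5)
The plan is to derive the lemma directly from Theorem \ref{T2.2} together with Proposition \ref{P2.1}, handling the $A$-part and the $B$-part of the product symmetrically.

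First, suppose $A$ is $B$-separated. By Theorem \ref{T2.2}, $(id, triv)$ is compatible with $(A, B)$. So there is an endomorphism $\gamma$ of $\langle A \cup B\rangle$ extending $id$ on $A$ and $triv$ on $B$. Now take any product with $\prod_{i=1}^{n}a_ib_i = e$. By Proposition \ref{P2.1},
\[
\gamma\Bigl(\prod_{i=1}^{n}a_ib_i\Bigr) = \prod_{i=1}^{n} id(a_i)\,triv(b_i) = \prod_{i=1}^{n}a_i .
\]
Since $\gamma$ is a homomorphism, $\gamma(e) = e$. Hence $\prod_{i=1}^{n}a_i = e$.

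Second, since $B$ is $A$-separated, the symmetric version of Theorem \ref{T2.2} (with the roles of $A$ and $B$ swapped) shows that $(triv, id)$ is compatible with $(A, B)$. Let $\delta$ be the extension. Proposition \ref{P2.1} gives
\[
\delta\Bigl(\prod_{i=1}^{n}a_ib_i\Bigr) = \prod_{i=1}^{n} b_i ,
\]
and therefore $\prod_{i=1}^{n}b_i = \delta(e) = e$.

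The only point that needs care is the symmetric use of Theorem \ref{T2.2}, since it is stated with $A$ first. Its proof, like Lemma \ref{L2.1}, never uses the order of the factors. Still, $\prod a_ib_i$ with $B$-elements interleaved is not literally a word of the form ``$A$-element then $B$-element'' once the roles are swapped.

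If one prefers to avoid this issue, I would instead argue through quotients. Let $\pi_B : \langle A\cup B\rangle \to \langle A\cup B\rangle / N_B$ be the quotient map. It kills every $b_i$, so $\pi_B(a_1\cdots a_n) = \pi_B(\prod a_ib_i) = \pi_B(e)$. Hence $a_1\cdots a_n \in A \cap N_B = \{e\}$. In the same way, $\pi_A$ kills every $a_i$, which gives $b_1\cdots b_n \in B\cap N_A = \{e\}$.

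This quotient argument is fully symmetric and uses only the separatedness hypotheses. I expect to present it as the main proof, with the Theorem \ref{T2.2} route as a remark.
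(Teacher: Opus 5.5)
Your proposal is correct. The route you demote to a remark is exactly the paper's proof: use Theorem~\ref{T2.2} to get an extension of $(id, triv)$, apply it to the word $\prod_{i=1}^{n}a_ib_i=e$, and argue ``similarly'' with $(triv, id)$. Your hesitation about the symmetric use is unnecessary. Compatibility only asserts that some endomorphism $\delta$ of $\langle A\cup B\rangle=\langle B\cup A\rangle$ extends the given pair, and that condition is symmetric in $A$ and $B$. Once $\delta$ exists it is a homomorphism, so $\delta(\prod_{i=1}^n a_ib_i)=\prod_{i=1}^n\delta(a_i)\delta(b_i)$ for any interleaving whatsoever.

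Your quotient argument, which you intend as the main proof, is a genuinely more direct route. It uses only two facts: $N_B$ is a normal subgroup of the join containing $B$, and $A\cap N_B=\{e\}$. It therefore bypasses Theorem~\ref{T2.2}, and with it the word manipulations of Proposition~\ref{notIndep} on which that theorem rests.

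The two routes have the same engine underneath. Because $\langle A\cup B\rangle=A\cdot N_B$ and $A\cap N_B=\{e\}$, the map $a\mapsto aN_B$ is an isomorphism $A\to\langle A\cup B\rangle/N_B$; this is Lemma~\ref{FactoringL}. Composing $\pi_B$ with the inverse of this isomorphism gives precisely the extension of $(id, triv)$.

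So the paper's route keeps continuity with its theme that separatedness is the same as compatibility of $(id, triv)$. Yours buys brevity and self-containedness, and it exposes the normal-subgroup mechanism that the paper's word-level argument hides.
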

			
			\begin{proof}
				Suppose that $A$ is $B$-separated. Then for any $ \prod_{i=1}^{n}a_ib_i = e \in \ l\langle A \cup B\rangle $ we get that,  $ \alpha(e) = \alpha(\prod_{i=1}^{n}a_ib_i) =  \prod_{i=1}^{n}\alpha(a_i)\beta(b_i) = e.$ By assumption and using  Theorem \ref{T2.2}, we know that $(id, triv)$ is compatible with $(A, B)$ and so let $\alpha$ = id and $\beta$ = triv. This gives us that $\prod_{i=1}^{n}\text{id}(a_i) = e \implies \text{id}(a_1 \dots a_n) = e \implies a_1 \dots a_n = e$.
				
				Similarly, $B$ is $A$-separated implies that  $\forall \ \prod_{i=1}^{n} a_ib_i= e \in \ \langle A \cup B\rangle $, it is the case that  $\prod_{i=1}^{n}b_i= e$.
			\end{proof}

			\begin{lemma}\label{normalLemma1}
				If $A$ and $B$ are subgroups such that $A \cap B = \{e\}$ and $A$ and $B$ are normal in their join, then $A$ is $B$-separated and $B$ is $A$-separated.
				
			\end{lemma}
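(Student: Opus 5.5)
The plan is to compute $N_A$ and $N_B$ explicitly and then apply almost disjointness. Recall that $N_B$ is the minimal normal subgroup of the join $\langle A \cup B\rangle$ containing $B$. Since $B$ is assumed normal in $\langle A \cup B\rangle$, $B$ is itself a normal subgroup of the join containing $B$. By minimality, $N_B \subseteq B$, and of course $B \subseteq N_B$, so $N_B = B$. To check this concretely from the definition $N_B = \langle Conj(B)_{\langle A\cup B\rangle}\rangle$: every conjugate $gBg^{-1}$ with $g$ in the join equals $B$ by normality. So the generating set is just $B$, and the subgroup it generates is $B$.

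By the symmetric argument, normality of $A$ in the join gives $N_A = A$.

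With these identifications the separatedness conditions become exactly the hypothesis. We have $A \cap N_B = A \cap B = \{e\}$, so by the definition of a $B$-separated set, $A$ is $B$-separated. Likewise $B \cap N_A = B \cap A = \{e\}$, so $B$ is $A$-separated.

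There is essentially no obstacle here. The only point needing care is that normality is taken relative to the join $\langle A \cup B\rangle$, which is also the ambient group in which $N_A$ and $N_B$ are formed according to the paper's convention. This is exactly what the hypothesis provides, so the identification $N_B = B$ (and $N_A = A$) is legitimate. Consequently, in this situation Theorem \ref{mainTheorem}'s necessary condition and, via Theorem \ref{T2.2}, the compatibility of $(id, triv)$ and $(triv, id)$ both follow from almost disjointness alone.
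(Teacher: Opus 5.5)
Your proposal is correct and follows essentially the same route as the paper: normality in the join gives $N_A = A$ and $N_B = B$, so $A \cap N_B$ and $B \cap N_A$ both reduce to $A \cap B = \{e\}$. The paper passes through $N_A \cap N_B = \{e\}$ before reading off the two separatedness conditions, while you read them off directly; the difference is cosmetic.
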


			\begin{proof}
				Suppose the conditions. Since $A$ and $B$ are normal in the join, then $A = N_A $ and $B = N_B $. As $A \cap B = \{e\}$, this gives us that $N_A  \cap  N_B  = \{e\}.$ This implies that $A\  \cap N_B  = \{e\}$ and  $B\  \cap N_A  = \{e\}$, hence $A$ is $B$-separated and $B$ is $A$-separated.
			\end{proof}
			
			\begin{corollary}\label{normalLemma2}
				If $A$ and $B$ are such that $A \cap B = \{e\}$ and $A$ and $B$ are normal in their join then $\forall \  \prod_{i=1}^{n}a_ib_i = e \in \ \langle A \cup B\rangle $, it is the case that $\prod_{i=1}^{n}a_i = e$ and $\prod_{i=1}^{n}b_i= e$.
				
			\end{corollary}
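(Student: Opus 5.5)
The plan is to chain Lemma \ref{normalLemma1} with Lemma \ref{productsInJoin}. The hypotheses of Corollary \ref{normalLemma2} are exactly those of Lemma \ref{normalLemma1}: $A \cap B = \{e\}$ and both $A$ and $B$ are normal in $\langle A \cup B\rangle$. That lemma gives that $A$ is $B$-separated and $B$ is $A$-separated. These are precisely the hypotheses of Lemma \ref{productsInJoin}, which states that for every product $\prod_{i=1}^{n}a_ib_i = e$ in the join we have $\prod_{i=1}^{n}a_i = e$ and $\prod_{i=1}^{n}b_i = e$. So the corollary follows by composing the two lemmas.

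For a self-contained check, there is also a direct argument. Since $A, B$ are normal in the join with $A\cap B=\{e\}$, the commutator $[a,b] = aba^{-1}b^{-1}$ lies in $A \cap B$, so every element of $A$ commutes with every element of $B$. Then $\prod a_ib_i = (\prod a_i)(\prod b_i)$ by rearranging commuting factors. If this equals $e$, then $\prod a_i = (\prod b_i)^{-1} \in A\cap B = \{e\}$, so both products are trivial.

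I do not expect a genuine obstacle. The only care needed is that Lemma \ref{productsInJoin} is applied in the direction stated, namely that separatedness of both pairs yields triviality of both coordinate products.
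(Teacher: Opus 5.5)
Your proposal is correct and is the paper's proof: Lemma \ref{normalLemma1} gives that $A$ is $B$-separated and $B$ is $A$-separated, and Lemma \ref{productsInJoin} then yields $\prod_{i=1}^{n}a_i = e$ and $\prod_{i=1}^{n}b_i = e$. Your supplementary direct argument is also valid: every element of $A$ commutes with every element of $B$, so $\prod_{i=1}^{n}a_ib_i = (\prod_{i=1}^{n}a_i)(\prod_{i=1}^{n}b_i)$ and hence $\prod_{i=1}^{n}a_i \in A \cap B = \{e\}$, and it has the merit of not relying on Theorem \ref{T2.2}.
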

			
			\begin{proof}
				Using Lemma \ref{productsInJoin} and Lemma \ref{normalLemma1}, we get the result.	
			\end{proof}

			\noindent We need the following proposition which is known in group theory. It states that the elements commute between almost disjoint normal subgroups.
			
			\begin{proposition}\label{normal-subgroups-and-commuting-elements-1}
				If $A$ and $B$ are normal subgroups of a group such that $A \cap B = \{e\}$, then  for any $a \in A, b\in B , ab = ba$.
			\end{proposition}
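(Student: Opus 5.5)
The plan is to use the commutator $[a,b] = aba^{-1}b^{-1}$ and show it lies in $A \cap B$. Fix $a \in A$ and $b \in B$, where $A$ and $B$ are normal subgroups of the ambient group $G$ with $A \cap B = \{e\}$.

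First I will show $aba^{-1}b^{-1} \in A$. Group it as $a \cdot (b a^{-1} b^{-1})$. Since $A$ is normal in $G$, the conjugate $b a^{-1} b^{-1}$ of $a^{-1} \in A$ lies in $A$. Multiplying on the left by $a \in A$ keeps the product in $A$, because $A$ is a subgroup.

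Next I will show $aba^{-1}b^{-1} \in B$. Group it as $(a b a^{-1}) \cdot b^{-1}$. Since $B$ is normal, $aba^{-1} \in B$, and $b^{-1} \in B$, so the product is in $B$.

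Combining the two steps, $aba^{-1}b^{-1} \in A \cap B = \{e\}$. Hence $aba^{-1}b^{-1} = e$, and multiplying on the right by $b a$ gives $ab = ba$.

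There is no real obstacle. The only point needing care is choosing the bracketing of the commutator so that normality of the appropriate subgroup applies in each half. Normality in the whole group, or even just in the join $\langle A \cup B\rangle$, is enough, since $a$ and $b$ both lie in the join. So the result can also be applied directly in the setting of Lemma~\ref{normalLemma1}, where $A$ and $B$ are only assumed normal in their join.
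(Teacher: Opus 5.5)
Your proof is correct and is essentially the paper's argument: the commutator $aba^{-1}b^{-1}$ lies in $A$ via the bracketing $a(ba^{-1}b^{-1})$ and in $B$ via $(aba^{-1})b^{-1}$, hence in $A \cap B = \{e\}$. Your remark that normality in the join $\langle A \cup B\rangle$ already suffices is also correct, and that is the setting in which the paper applies this result in Lemma~\ref{normalsCommute}.
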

			
			\begin{proof}
				Let $a \in A$ and $b \in B$. Consider $aba^{-1}b^{-1}$. This is in $A$ since $aba^{-1}b^{-1} = a(ba^{-1}b^{-1})$ and $A$ is normal. Similarly, $aba^{-1}b^{-1} \in B$ since $aba^{-1}b^{-1} = (aba^{-1})b^{-1}$ and $B$ is normal. However, $A \cap B = \{e\}.$ This gives us that $aba^{-1}b^{-1} = e$ and so $ab = ba$.
			\end{proof}
			
			\noindent If we consider $A$ and $B$ as subgroups in their join, then we get a type of converse to the above. This states that if the elements commute between $A$ and $B$, then $A$ and $B$ are each normal in their join.
			
			\begin{proposition}\label{normal-subgroups-and-commuting-elements-2}
				If for any $a \in A$, $b \in B$, it is the case that $ab = ba$, then $A$ and $B$ are normal subgroups of their join. 
			\end{proposition}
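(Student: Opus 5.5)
To prove Proposition \ref{normal-subgroups-and-commuting-elements-2}, the plan is to show directly that $A$ is closed under conjugation by every element of the join $\langle A \cup B\rangle$; the argument for $B$ is then symmetric. The key observation is that conjugation by a fixed element $g$ is an automorphism of the join. So it suffices to check that $A$ is invariant under conjugation by the generators $A \cup B$ and their inverses. Invariance under arbitrary words in these generators then follows by composing the corresponding conjugation maps.

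The steps, in order, are as follows.

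\textbf{Write a general element as a word.} By the definition of the join, every $g \in \langle A \cup B\rangle$ can be written as a finite product $g = x_1 x_2 \cdots x_m$ with each $x_j \in A \cup B$. Inverses of generators are not needed separately, since $A$ and $B$ are subgroups and are already closed under inverses.

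\textbf{Handle a single generator.} Fix $a \in A$ and a generator $x$.
\begin{itemize}
\item If $x \in A$, then $x a x^{-1} \in A$ because $A$ is a subgroup.
\item If $x \in B$, then the hypothesis gives $x a = a x$, so $x a x^{-1} = a x x^{-1} = a \in A$.
\end{itemize}
Hence $x A x^{-1} \subseteq A$ for every $x \in A \cup B$.

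\textbf{Extend to arbitrary words.} Proceed by induction on $m$, using
\[
g a g^{-1} = x_1\bigl(x_2 \cdots x_m \, a \, x_m^{-1} \cdots x_2^{-1}\bigr)x_1^{-1}.
\]
By the induction hypothesis the inner bracket lies in $A$, and the single-generator step then shows that conjugating it by $x_1$ keeps it in $A$. Therefore $g A g^{-1} \subseteq A$ for all $g$ in the join, which means $A$ is normal in $\langle A \cup B\rangle$.

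\textbf{Conclude by symmetry.} The hypothesis $ab = ba$ is symmetric in $A$ and $B$, so exchanging their roles gives that $B$ is normal in the join as well.

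There is no serious obstacle here. The only point requiring care is to remember that elements of the join are words in which $A$-letters and $B$-letters are interleaved, not simply products $ab$. This is exactly why the argument goes through the induction on word length rather than using a one-line computation with $g = ab$.
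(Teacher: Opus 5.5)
Your proof is correct, but it is organized differently from the paper's.

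The paper writes a general element of the join as an alternating word $a_1b_1\cdots a_nb_n$ and uses the commutation hypothesis globally. All $B$-letters can be slid past all $A$-letters, keeping their relative order. The $B$-letters coming from $g$ and from $g^{-1}$ then cancel, and $g a_k g^{-1}$ collapses to $(a_1\cdots a_n)\,a_k\,(a_1\cdots a_n)^{-1}\in A$.

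You instead check invariance one generator at a time and induct on word length. This route uses strictly less. The only fact about $B$ you need is $bab^{-1}\in A$ for $b\in B$, so your argument shows that $A$ is normal in $\langle A\cup B\rangle$ whenever $B$ normalizes $A$. It also avoids the bookkeeping of rearranging a long word whose $B$-letters need not commute with each other.

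The paper's computation gives more concrete information in the commuting case. Conjugation by any element of the join acts on $A$ as conjugation by an element of $A$. The same rearrangement also shows that every element of the join lies in $AB$.

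That last point means your closing remark is too pessimistic. Under this hypothesis $AB=BA$, so $AB$ is a subgroup and $\langle A\cup B\rangle=AB$. The one-line computation with $g=ab$ therefore does suffice, once that observation is made.
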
	
			
			\begin{proof}
				Let $a_k \in A$ and $a_1b_1 \dots a_nb_n \in \ \langle A \cup B\rangle $. Then, \\ $(a_1b_1 \dots a_nb_n)a_k(a_1b_1 \dots a_nb_n)^{-1} = a_1b_1 \dots a_nb_n a_k b_n^{-1} a_n^{-1} \dots b_1^{-1}a_1^{-1}$ \\ $ \textcolor{white}{.} \ \ \ \ \ \ \ \ \ \ \ \ \ \ \ \ \ \ \ \ \ \ \ \ \ \ \ \ \ \ \ \ \ \ \ \ \ \ \ \ \ = a_1 \dots a_n a_k b_1 \dots b_n b_n^{-1} \dots b_1^{-1} a_n^{-1} \dots a_1^{-1}$  since by \\  assumption, the $a_i$s and $b_i$s commute for $ i \in \{1 \dots n, k\}$. After reducing, this becomes \\ $a_1 \dots a_n a_k a_n^{-1} \dots a_1^{-1}$ which is in $A$ and hence $A$ is a normal subgroup of $\langle A \cup B\rangle $. The proof is similar for $B$.
			\end{proof}
			
			\noindent Overall, we get the following characterization which states that two almost disjoint subgroups are normal in their join if and only if the elements commute between them.
			
			\begin{lemma}\label{normalsCommute}
				If $A$ and $B$ are subgroups such that $A \cap B = \{e\}$, then they are normal in their join, if and only if, for any $a \in A$ and $b \in B$, $ab =ba$.
			\end{lemma}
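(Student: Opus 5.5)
The lemma combines the two preceding propositions, so the plan is to prove each direction by invoking one of them.

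For the forward direction, assume $A \cap B = \{e\}$ and that $A$ and $B$ are normal in their join $\langle A \cup B\rangle$. Apply Proposition \ref{normal-subgroups-and-commuting-elements-1} with the ambient group taken to be the join itself. It gives $ab = ba$ for every $a \in A$ and $b \in B$. The only point to check is that the proposition only needs $A$ and $B$ to be normal in some common group containing both. The join is such a group, so this is legitimate.

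For the reverse direction, assume $ab = ba$ for all $a \in A$ and $b \in B$. Proposition \ref{normal-subgroups-and-commuting-elements-2} then says directly that $A$ and $B$ are normal in $\langle A \cup B\rangle$. The hypothesis $A \cap B = \{e\}$ is not used here. It is only needed for the forward direction, where it is what forces the commutator $aba^{-1}b^{-1}$, which lies in $A \cap B$, to be trivial.

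There is no real obstacle here. The one thing worth being careful about is that Proposition \ref{normal-subgroups-and-commuting-elements-2} relies on every element of the join being expressible as a product $a_1b_1\cdots a_nb_n$. This holds because the join is generated by $A \cup B$ and both are subgroups, so the factors can be grouped alternately with identities inserted where needed. With that noted, the lemma follows by citing the two propositions.
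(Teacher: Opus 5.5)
Your proposal is correct and is essentially the paper's proof, which likewise combines Proposition~\ref{normal-subgroups-and-commuting-elements-1} (applied with the join as the ambient group) for the forward direction and Proposition~\ref{normal-subgroups-and-commuting-elements-2} for the converse. Your side observations are both accurate: $A \cap B = \{e\}$ is needed only for the forward direction, and every element of $\langle A \cup B\rangle$ can be written in the alternating form $a_1b_1\cdots a_nb_n$.
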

			
			\begin{proof}
				
				This follows by combining Proposition \ref{normal-subgroups-and-commuting-elements-1} and Proposition \ref{normal-subgroups-and-commuting-elements-2}.
			\end{proof}
			
			\noindent The next result gives an important sufficient condition for independence.
			
			\begin{theorem}\label{normalTheorem}
				If $A$ and $B$ are subgroups which are normal in their join such that $A \cap B = \{e\}$, then $ A \indep B$.
			\end{theorem}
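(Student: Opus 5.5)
The plan is to build the extension explicitly. By Lemma \ref{normalsCommute}, since $A \cap B = \{e\}$ and both subgroups are normal in their join, every $a \in A$ commutes with every $b \in B$. Hence every element of $\langle A \cup B\rangle$, written as a word $\prod_{i=1}^{n} a_ib_i$, can be rearranged into the form $ab$ with $a = \prod_{i=1}^n a_i \in A$ and $b = \prod_{i=1}^n b_i \in B$. So the join equals $A \cdot B$. Given arbitrary endomorphisms $\alpha$ of $A$ and $\beta$ of $B$, I will define $\gamma: \langle A \cup B\rangle \to \langle A \cup B\rangle$ by $\gamma(ab) = \alpha(a)\beta(b)$.

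The first step is well-definedness, which I expect to be the only real point. Suppose $ab = a'b'$. Then $a'^{-1}a = b'b^{-1} \in A \cap B = \{e\}$, so $a = a'$ and $b = b'$; the representation is unique. Alternatively, one could route this through Corollary \ref{normalLemma2} applied to the word $a b b'^{-1} a'^{-1} = e$. The direct argument is cleaner, and it is what I would use.

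The second step is the homomorphism check, using commutation in both the source and the target. We have $\gamma(ab\cdot a'b') = \gamma(aa'\,bb') = \alpha(aa')\beta(bb') = \alpha(a)\alpha(a')\beta(b)\beta(b')$. Since $\alpha(a') \in A$ and $\beta(b) \in B$ commute, this equals $\alpha(a)\beta(b)\alpha(a')\beta(b') = \gamma(ab)\gamma(a'b')$.

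Finally, $\gamma$ restricts to $\alpha$ on $A$ (take $b = e$) and to $\beta$ on $B$ (take $a = e$), and its image lies in $A\cdot B$, which is the join. Hence every pair $(\alpha, \beta)$ is compatible with $(A,B)$, giving $A \indep B$.
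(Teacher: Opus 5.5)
Your proof is correct, but it takes a different route from the paper's. The paper argues by contradiction. It invokes Proposition~\ref{notIndep} to obtain a relation $\prod_{i=1}^n a_ib_i = e$ whose image $\prod_{i=1}^n\alpha(a_i)\beta(b_i)$ is not $e$. It then uses Lemma~\ref{normalLemma1} and Lemma~\ref{productsInJoin} to get $\prod a_i = e = \prod b_i$. Lemma~\ref{productsInJoin} itself rests on Theorem~\ref{T2.2}, i.e.\ on $(\mathrm{id},\mathrm{triv})$ and $(\mathrm{triv},\mathrm{id})$ being compatible. Finally, commutation rewrites the image as $\alpha(\prod a_i)\beta(\prod b_i) = e$.

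You instead build the extension directly. You identify the join as the internal direct product $A\cdot B$ with unique normal form $ab$, and you get uniqueness in one line from $A\cap B=\{e\}$ rather than through the separatedness machinery. You then define $\gamma$ componentwise as $\alpha\times\beta$. The alternative you mention via Corollary~\ref{normalLemma2} is essentially the paper's path.

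Your version is self-contained and gives an explicit formula for the extension. It makes the structural reason transparent: the join is $A\times B$, so endomorphisms extend componentwise. It also avoids any reliance on Proposition~\ref{notIndep} and its word-concatenation bookkeeping. The paper's version instead showcases its general relation-killing criterion. It shows how the necessary separatedness conditions of Theorem~\ref{mainTheorem}, together with commutation, become sufficient in this case.
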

			
			\begin{proof} We proceed using proof by contradiction. Suppose that $A$ and $B$ are normal subgroups in their join but  $A \not\indep B$. Then by Proposition \ref{notIndep}, there exists homomorphisms $\alpha : A \rightarrow A$ and $\beta : B \rightarrow B$ such that for some $\prod_{i=1}^{n}a_ib_i = e \in \ \angle A\cup B\rangle  $ and it is the case that $\prod_{i=1}^{n}\alpha(a_i)\beta(b_i) \not = e$. By Lemma \ref{normalLemma1} and Lemma \ref{productsInJoin}, we know that $\prod_{i=1}^{n}a_i = e$ and $\prod_{i=1}^{n}b_i = e$.  Now by Lemma \ref{normalsCommute} the elements of $A$ commute with the elements of $B$, so by rearranging terms we get that $\prod_{i=1}^{n}\alpha(a_i)\beta(b_i) = \prod_{i=1}^{n}\alpha(a_i)\prod_{i=1}^{n} \beta(b_i)$. Now, since $\alpha$ and $\beta$ are homomorphisms this implies that $ \prod_{i=1}^{n}\alpha(a_i)\prod_{i=1}^{n} \beta(b_i)= \alpha (\prod_{i=1}^{n}a_i) \beta(\prod_{i=1}^{n}b_i) = e $ as  $\prod_{i=1}^{n}a_i = e$ and $\prod_{i=1}^{n}b_i = e$. This gives us that,  $\prod_{i=1}^{n}\alpha(a_i)\beta(b_i) = e$, which is a contradiction.
			\end{proof}	
			
			\begin{corollary}
				If $A$ and $B$ are subgroups of an abelian group such that $A \cap B = \{e\}$, then $A \indep B$.
			\end{corollary}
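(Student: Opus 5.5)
This follows directly from Theorem \ref{normalTheorem}, so the plan is to check that its hypotheses hold for subgroups of an abelian group. Let $A$ and $B$ be subgroups of an abelian group $G$ with $A \cap B = \{e\}$. The join $\langle A \cup B\rangle$ is a subgroup of $G$, so it is abelian as well.

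The first step is to show that $A$ and $B$ are normal in their join. Every subgroup of an abelian group is normal, because for any $g \in \langle A \cup B\rangle$ and $a \in A$ we have $gag^{-1} = a \in A$, and the same holds for $B$. Alternatively, we can appeal to Proposition \ref{normal-subgroups-and-commuting-elements-2}: every $a \in A$ commutes with every $b \in B$ since $G$ is abelian, so $A$ and $B$ are normal in their join.

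The second step is to combine this with the hypothesis $A \cap B = \{e\}$. Theorem \ref{normalTheorem} then applies directly and gives $A \indep B$.

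There is no real obstacle here. The only point to be careful about is that normality must be checked in the join $\langle A \cup B\rangle$ and not merely in $G$. This is immediate, since the join inherits commutativity from $G$.
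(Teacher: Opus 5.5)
Your proposal is correct and is essentially the paper's own argument: the join inherits commutativity from the ambient abelian group, so $A$ and $B$ are normal in $\langle A \cup B\rangle$, and Theorem~\ref{normalTheorem} then gives $A \indep B$ from $A \cap B = \{e\}$. You are also right that normality has to be checked in the join rather than in $G$, and you handle that point correctly.
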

			
			\begin{proof}
				If $A$ and $B$ are subgroups of an abelian group, then this implies that their join is abelian and so they are normal in the join and we can apply  Theorem \ref{normalTheorem}.
			\end{proof}
			
			\noindent Note that the above corollary does not mean that any two abelian groups are independent, see Example \ref{E1} for two non-independent abelian groups (however, they are not subgroups of an abelian group). This result shows that for the category of Abelian Groups, almost disjointness is enough to force independence.\\

			\noindent \textbf{Commuting Pair check to Decide Independence:}\\
			
			The previous results give us the following very useful statement which can help us decide independence in some cases. It states that if $A$ and $B$ are almost disjoint subgroups whose elements commute between each other, then they are independent.

			\begin{theorem}\label{normalsC}
				
				If $A$ and $B$ are subgroups such that $A \cap B = \{e\}$ and for any $a \in A$ and $b \in B, \ ab =ba \implies A \indep B$. 
			\end{theorem}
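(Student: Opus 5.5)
The statement should follow by chaining two earlier results: Lemma \ref{normalsCommute} and Theorem \ref{normalTheorem}.

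First, assume $A \cap B = \{e\}$ and that $ab = ba$ for every $a \in A$ and $b \in B$. Proposition \ref{normal-subgroups-and-commuting-elements-2} (equivalently, the ``if'' direction of Lemma \ref{normalsCommute}) then gives that $A$ and $B$ are both normal in their join $\langle A \cup B\rangle$. Second, since $A$ and $B$ are normal in the join and almost disjoint, Theorem \ref{normalTheorem} applies directly and yields $A \indep B$.

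There is essentially no obstacle here. The only point worth checking is that the hypotheses match. Proposition \ref{normal-subgroups-and-commuting-elements-2} needs only commutation, and almost disjointness is then required to invoke Theorem \ref{normalTheorem}. The statement's wording ``$ab = ba \implies A \indep B$'' should be read as the conjunction of the two hypotheses implying independence.

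As a sanity check one could also argue directly. Define $\gamma(ab) = \alpha(a)\beta(b)$ on $\langle A \cup B\rangle = A\cdot B$, which is the right form because $A$ and $B$ commute. This is well defined by almost disjointness, and it is a homomorphism because $\alpha(A)$ and $\beta(B)$ commute. The citation route above is shorter, so I would use that.
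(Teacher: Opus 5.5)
Your proposal is correct and is exactly the paper's argument. The paper proves this theorem by combining Lemma~\ref{normalsCommute} (commutation together with $A \cap B = \{e\}$ gives normality of $A$ and $B$ in the join) with Theorem~\ref{normalTheorem}; your optional direct check via $\gamma(ab)=\alpha(a)\beta(b)$ on the internal direct product $A\cdot B$ is also valid, and it avoids the paper's more roundabout route through Proposition~\ref{notIndep}.
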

			
			\begin{proof}
				
				Combining Lemma \ref{normalsCommute} and Theorem \ref{normalTheorem} gives the result.
			\end{proof}

			This leads us to the next result, which shows us that not only is it the case that almost disjoint normal subgroups, $A$ and $B$ say, are independent, but that for any subgroups $A' \leq A$ and $B' \leq B$  will also be independent. Not only that, but they will be normal in their join regardless of whether the subgroups were chosen to be normal in $A$ and $B$ respectively.
			
			\begin{proposition}
				Let $A$ and $B$ be subgroups such that $A \cap B = \{e\}$ and they are normal in $\langle A \cup B\rangle $. Then for any subgroups $A' \leq A$ and $B' \leq B$, $A'$ and $B'$ are normal in $\langle A' \cup B'\rangle $ and so are independent. 
			\end{proposition}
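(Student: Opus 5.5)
The plan is to reduce everything to the commuting criterion of Lemma \ref{normalsCommute} and then invoke Theorem \ref{normalsC}. The key observation is that both hypotheses in that lemma, almost disjointness and elementwise commutation, pass down to subgroups. So we never need to compare the joins $\langle A \cup B\rangle$ and $\langle A' \cup B'\rangle$ directly; we only transfer two elementwise properties.

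The steps, in order, are as follows.

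\emph{Step 1: $A$ and $B$ commute elementwise.} $A$ and $B$ are normal in $\langle A \cup B\rangle$ and satisfy $A \cap B = \{e\}$. By Proposition \ref{normal-subgroups-and-commuting-elements-1}, applied inside the group $\langle A \cup B\rangle$, or equivalently by the forward direction of Lemma \ref{normalsCommute}, we get $ab = ba$ for all $a \in A$ and $b \in B$.

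\emph{Step 2: restrict to the subgroups.} Let $A' \leq A$ and $B' \leq B$ be arbitrary. Then $A' \cap B' \subseteq A \cap B = \{e\}$, so $A' \cap B' = \{e\}$. Moreover every $a' \in A'$ lies in $A$ and every $b' \in B'$ lies in $B$, so $a'b' = b'a'$ by Step 1.

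\emph{Step 3: normality in the new join.} Apply Proposition \ref{normal-subgroups-and-commuting-elements-2} (equivalently, the converse direction of Lemma \ref{normalsCommute}) to the pair $(A', B')$. This gives that $A'$ and $B'$ are normal in $\langle A' \cup B'\rangle$. Note that this holds whether or not $A'$ is normal in $A$ or $B'$ is normal in $B$, exactly as remarked before the statement.

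\emph{Step 4: independence.} Since $A' \cap B' = \{e\}$ and $A'$, $B'$ are normal in their join, Theorem \ref{normalTheorem} yields $A' \indep B'$. Alternatively, Theorem \ref{normalsC} applies directly to the output of Step 2.

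The only point needing care, which I expect to be the main (mild) obstacle, is Step 3. Normality of $A'$ must be checked in $\langle A' \cup B'\rangle$, not in $\langle A \cup B\rangle$, and $A'$ need not be normal in the larger join. This is exactly why the argument goes through the commuting property rather than trying to restrict normality directly. Proposition \ref{normal-subgroups-and-commuting-elements-2} only needs elementwise commutation of the generating subgroups, so it applies verbatim with $A', B'$ in place of $A, B$.
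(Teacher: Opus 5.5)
Your proposal is correct and follows essentially the same route as the paper. You pass elementwise commutation (the forward direction of Lemma~\ref{normalsCommute}) and almost disjointness down to $A'$ and $B'$, recover normality in $\langle A' \cup B'\rangle$ via the converse direction, and conclude with Theorem~\ref{normalTheorem}, with the added merit of stating explicitly that normality is checked in the smaller join rather than inherited from $\langle A \cup B\rangle$.
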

			
			\begin{proof}
				Suppose that $A$ and $B$ be subgroups such that $A \cap B = \{e\}$ and they are normal in $\langle A \cup B\rangle $. Let $A' \leq A$ and $B' \leq B$. Then by Lemma \ref{normalsCommute}, we get that for any $ a\in A$ and $b \in B$ that $ab=ba$. This implies that for any $a \in A'$ and $b \in B'$ that $ab = ba$ as $A' \subseteq A$ and $B' \subseteq B$ . Since $A \cap B = \{e\}$, we also get that $A' \cap B' = \{e\}.$ Then by Lemma \ref{normalsCommute}, it follows that $A'$ and $B'$ are normal in $\langle A' \cup B'\rangle $ and so by Theorem \ref{normalTheorem}, they are independent.
			\end{proof}

			\begin{theorem} \label{ppp}
				If $A$ and $B$ are subgroups such that $A$ is a normal in their join but 
				$B$ is not, then $A$ and $B$ are not independent.
			\end{theorem}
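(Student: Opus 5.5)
The plan is to show that $A$ fails to be $B$-separated, that is, to produce a non-trivial element of $A \cap N_B$. Theorem \ref{mainTheorem} (equivalently Lemma \ref{L2.1} applied to $(id, triv)$) then gives that $A$ and $B$ are dependent. The natural candidate for such an element is a commutator $[a,b] = aba^{-1}b^{-1}$ with $a \in A$ and $b \in B$. Normality of $A$ puts it in $A$, and its form as a product of conjugates of elements of $B$ puts it in $N_B$.

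First I would locate an element $a \in A$ that conjugates some $b \in B$ outside $B$.
\begin{itemize}
\item Every element of $\langle A \cup B\rangle$ is a product of elements of $A$ and $B$.
\item Conjugation by elements of $B$ maps $B$ into $B$.
\item Hence if $aBa^{-1} \subseteq B$ held for every $a \in A$, then conjugation by any word in $A \cup B$ would map $B$ into $B$. Applying this also to the inverse word gives equality, so $B$ would be normal in the join, contrary to hypothesis.
\end{itemize}
So there exist $a \in A$ and $b \in B$ with $aba^{-1} \notin B$. This step is the only place where non-normality of $B$ is used, and it is the mild obstacle: one has to reduce from arbitrary conjugators in the join to conjugators from $A$.

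Next, set $c = aba^{-1}b^{-1}$ and check three things.
\begin{itemize}
\item $c \in A$, since $c = a\,(ba^{-1}b^{-1})$ and $ba^{-1}b^{-1} \in A$ by normality of $A$ in the join.
\item $c \in N_B$, since $c = (aba^{-1})\,b^{-1}$ is a product of a conjugate of an element of $B$ and an element of $B$.
\item $c \neq e$, since otherwise $aba^{-1} = b \in B$, a contradiction.
\end{itemize}
Thus $c$ is a non-trivial element of $A \cap N_B$, so $A$ is not $B$-separated.

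Finally, Theorem \ref{mainTheorem} yields $A \not\indep B$. Concretely, by Lemma \ref{L2.1} the pair $(id, triv)$ is not compatible with $(A, B)$. No step beyond the reduction in the second paragraph requires real work.
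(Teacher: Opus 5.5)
Your proof is correct and is essentially the paper's argument. Both proofs use normality of $A$ to turn a non-commuting pair $a\in A$, $b\in B$ into a nontrivial element of $A$ that an extension of $(\text{id},\text{triv})$ would have to send both to itself and to $e$. You package this as the commutator $aba^{-1}b^{-1}\in A\cap N_B$ and invoke Theorem \ref{mainTheorem}, whereas the paper computes directly with the hypothetical extension $\gamma$ on $b^{-1}ab=a_k$. Your direct derivation of the pair (which in fact only needs $aba^{-1}\neq b$) also avoids the paper's appeal to Lemma \ref{normalsCommute}, which is stated under the extra hypothesis $A\cap B=\{e\}$.
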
	
			
			\begin{proof}
				
				Assume to get a contradiction that in their join $A$ is normal and $B$ is not but that $A \indep B$. Since $B$ is not normal in $\langle A \cup B\rangle $, then by Lemma \ref{normalsCommute}, there exists $a \in A, \ b \in B$ such that $ab \not= ba \implies b^{-1}ab \not= a$. Since $A$ is normal in $\langle A \cup B\rangle $, then $b^{-1}ab = a_k$ for some $a_k \not= a$. Consider the morphisms, $(id, triv)$ of the pair $(A, B)$.  As $A \indep B$, then they can be extended in their join to $\gamma$ say, hence we get that $\gamma(bab^{-1}) = \gamma(a_k) \implies \text{triv}(b)\text{id}(a)\text{triv}(b^{-1}) = \text{id}(a_k) \implies a = a_k$, which is a contradiction. Hence, their join morphism does not exist and $A$ and $B$ are not independent. 
			\end{proof}

			\noindent We therefore notice that, two subgroups which are normal in their join are always independent and if there is a pair $(A, B)$ such that one is normal in their join and the other is not, then this pair is never independent. This might prompt us to think that two subgroups which are both not normal in their join  would also never be independent. However, interestingly, this is not the case as the following examples show:

			\begin{example}\label{mainEg}
				\normalfont
				Let $A =  \{ e, (1 2)  \}$ and $B  = \{ e, (1 3) (2 4)\}$ which are subgroups of the symmetric group, $S_4$.\\ 
				Then $\langle A \cup B\rangle  = \{ e, (1 2), (3 4) , (1 2)(3 4), (1 3)(2 4), (1 4)(2 3), (1324), (1423)\}$.  Now,\\
				$N_A  = \{e, (12), (34), (12)(34)\}$ and\\
				$N_B  = \{ e, (1 3)(24), (12)(34), (14)(23)\}.$\\ Notice, that $A$ is $B$-separated and $B$ is $A$-separated, that is;\\
				$A \  \cap  N_B  = \{e\}$ and \\
				$B \  \cap N_A  = \{e\}.$
				
				\noindent This implies that $(id, triv)$ and (triv, id) are compatible for $(A, B)$ by Theorem \ref {T2.2}. Since the identity and trivial endomorphisms are the only endomorphisms of $A$ and $B$  then this implies that  $A \indep B$. Also, since $A \not= N_A $ and $B \not =  N_B $, then it can  be seen $A$ and $B$ are independent even though they are each not normal in their join.

			\end{example}
			
			\begin{example}\label{ZalansFavourite}
				\normalfont
				Consider the group $D_{\infty}$ given by the presentation $D_{\infty} = \langle  x, y \mid x^2=y^2=e\rangle $. Let $A = \langle x\rangle $ and $B = \langle y\rangle $ be the 
				subgroups generated  by $x$ and $y$ respectively. Now, \\
				
				\noindent $N_A  = \{z \in D_{\infty} \ | \ z \text{ has an  even number of $y$'s} \}$ and \\
				$N_B  = \{z \in D_{\infty} \ | \ z \text{ has  an even number of $x$'s} \}$.\\
				
				\noindent Observe that $A \  \cap N_B  = \{e\}$ and  $B \  \cap N_A  = \{e\}$. This implies that $(id, triv)$ and (triv, id) are compatible for $(A, B)$ by Theorem \ref {T2.2}.  As $A$ and $B$ only have these endomorphisms, then this implies that  $A \indep B$. Also, since $A \not= N_A $ and $B \not = N_B $, then it can  be seen $A$ and $B$ are independent even though they are each not normal in their join.
				
			\end{example}

			\noindent \textbf{Non-commuting Pair Check to Decide Dependence:}\\
			
			Now, we know by Theorem \ref{normalsC} and that if two subgroups $A$ and $B$ are such that for all $a \in A$ and $b \in B$, $ab=ba$, then $A$ and $B$  are independent subgroups. So, for a pair of subgroups to be dependent there must exist an $ a \in A$ and $b \in B$ such that $ab \not= ba$. However, this is not sufficient for non-independence as Example \ref{mainEg} illustrates. There, $A = \{e, (12)\}$ and $B = \{e, (13)(24)\}$ were shown to be independent, however (12) and (13)(24) do not commute. Nonetheless, as the next result shows, in many cases, a quick of check pairs of non-commuting elements can help us detect non-independence.

			\begin{proposition}\label{OrderTheorem}
				Let $A$ and $B$ be subgroups such that $\exists \ a \in A$ and $b \in B$ and $ab \not=ba$. If $|ab|$ is finite and   $|a| \not | \ |ab|$ or $|b| \not | \ |ab|$, then $A \not \indep B$.
			\end{proposition}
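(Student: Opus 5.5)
The plan is to test independence against the two special pairs of endomorphisms $(id, triv)$ and $(triv, id)$, just as in Lemma \ref{L2.1} and Theorem \ref{ppp}. Independence requires every pair of endomorphisms of $(A, B)$ to extend to the join, so to show $A \not\indep B$ it suffices to find one pair that cannot extend. I will argue by contradiction: assume $A \indep B$, and let $n = |ab|$, which is finite by hypothesis. I treat the two cases $|a| \nmid n$ and $|b| \nmid n$ separately; they are symmetric.

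\emph{Case $|a| \nmid n$.} Since $A \indep B$, the pair $(id, triv)$ extends to an endomorphism $\gamma$ of $\langle A \cup B\rangle$. By Proposition \ref{P2.1},
\[
\gamma(ab) = \text{id}(a)\,\text{triv}(b) = a .
\]
From $(ab)^n = e$ and the fact that $\gamma$ is a homomorphism we get
\[
e = \gamma\bigl((ab)^n\bigr) = \gamma(ab)^n = a^n .
\]
Hence $|a|$ is finite and divides $n$, which contradicts the assumption. If $|a|$ is infinite, then $a^n \neq e$ already gives the contradiction, so that subcase needs no separate handling.

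\emph{Case $|b| \nmid n$.} Here I use $(triv, id)$ instead. Its extension $\gamma'$ satisfies $\gamma'(ab) = b$, so $b^n = e$ and $|b|$ divides $n$, again a contradiction.

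In both cases the assumed extension cannot exist, so $A \not\indep B$. I do not expect a real obstacle here. The only point needing care is that $\gamma$ preserves powers, which is immediate since $\gamma$ is a homomorphism. The hypothesis $ab \neq ba$ is not actually used in this argument; it only indicates the situation where the criterion is informative. Indeed, if $a$ and $b$ commute and $A \cap B = \{e\}$, then $|ab| = \mathrm{lcm}(|a|, |b|)$, so the divisibility condition holds automatically and the criterion is never triggered.
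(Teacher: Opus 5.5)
Your proof is correct and takes the same approach as the paper. Assuming the extension $\gamma$ of $(id, triv)$ exists, applying it to $(ab)^{|ab|}=e$ gives $a^{|ab|}=e$, a contradiction; you write out the other case explicitly via $(triv, id)$, where the paper just says ``without loss of generality.'' You are also right that the hypothesis $ab \neq ba$ is never used in the argument.
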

			
			\begin{proof}
				Suppose, without loss of generality that $|a| \not | \ |ab|$.\\

				Consider the homomorphism pair $(id, triv)$for $(A, B)$.\\
				
				Now, $(ab)^{|ab|} = e$.\\

				If the extension of $(id, triv)$ exists for $(A, B)$, we get that,\\
				
				$(\text{id}(a)\text{triv}(b))^{|ab|} = \text{id}(e).$\\
				
				$\implies (\text{id}(a))^{|ab|}  = e$.\\
				
				$\implies a^{|ab|} = e$.\\
				
				However, this contradicts that $|a| \not | \ |ab|$. Therefore, the extension of $(id, triv)$ does not exist for $(A, B)$ and  $A$ is not independent to $B$.    
			\end{proof}
			
			\noindent We now show how this theorem can be used to quickly decide non-independence of subgroups. 
			
			\begin{example}\label{E2}
				\normalfont
				Let $A$ and $B$ be subgroups of $S_3$, the symmetric group on 3 elements, defined by $A = \{e, (12)\}$ and $B = \{e, (123), (132)\}$. Consider $a = (12)$ and $b = (123)$ (notice that these two elements do not commute). Then $ab = (12) (123) = (23)$. Therefore, the order of $b$ is 3 and the order of $ab$ is 2 and so $ |b| \not| \ |ab|$. Hence, $A$ and $B$ cannot be independent groups.
			\end{example}
			
			\noindent \textbf{Remark 1:} Notice that for commuting elements $a \in A$, $b\in B$ of finite orders, it is always the case that $|a|\  | \ |ab|$ and $|b| \ | \ |ab|$. So, this check for non-independence is (as expected) to be performed on non-commuting elements. In the case of Example \ref{mainEg}, even though $A = \{e, (12)\}$ and $B = \{e, (13)(24)\}$ are independent with non-commuting elements $a= (12) \in A$ and $ b=(13)(24) \in B$, we observe that $|(12)| = |(13)(24)| = 2$  and $|(12)((13)(24))|=|(1324)|=4$. So in this case, $|a| \ | \ |ab|$ and $|b| \ | \ |ab|$. However, it can be seen that this check would work to decide non-independence in "most" such cases with a non-commuting pair. This "most" is loose because we are comparing infinite sets. However, we may conjecture that if we were to linearly order the finite permutation groups by lexicographical ordering of their elements, then the number of counterexample groups (groups with non-commuting pairs which are nonetheless independent) in the first $n$ groups would be low for all $n$. Even, if this conjecture is not true, this check would still work for  many (infinite) cases, so this will be an important step in our heuristic algorithm in Section \ref{S5}.\\
			
			\noindent \textbf{Relationship between Commuting and Independence:}\\
			
			\noindent Let us say that the pair of subgroups $(A, B)$ \textbf{commutes} if $AB=BA$. Overall, the relationship between commuting and independence is as follows:  \\
			
			\noindent  $A \cap B = \{e\}$ and $(A,B)$  commutes  $\implies (A,B)$ is independent.\\
			
			\noindent \textbf{Remark 2}: The converse is "almost" but not actually true for the reasons just described. Another point is that, informally, a reason that the existence of a non-commuting pair does not automatically imply non-independence is that the groups may not be big enough for the homomorphisms on them to detect the "dependence" that non-commuting (non-free)  pairs intuitively indicate. In this case, enlarging the groups can detect the independence. See, the enlargement of Example \ref{mainEg} to  Example \ref{mainEg2} (mentioned later as an important counterexample). The "dependence" of the non-commuting pair of elements $((12), (13)(24))$ was not detected in the first example but in the enlarged second example, there are enough homomorphisms for this pair  to be a problem (i.e. there exists a pair of endomorphisms $(\alpha, \beta)$ which is not compatible with $(A, B)$).\\
			
			\noindent \textbf{Change of Conjugate Check For Detecting Dependence:}
			
			\begin{lemma}\label{ConjLemma}
				
				\noindent  $A$ is $B$-separated  then 
				there are no $\ a_1, a_2 \in A$ such that $a_1$ and $a_2$ are not conjugates in $A$ but are conjugates in $\langle  A \cup B\rangle $. \\
			\end{lemma}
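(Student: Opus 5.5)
The plan is to argue by contradiction, using the extension of $(id, triv)$ supplied by Theorem \ref{T2.2}. Suppose $A$ is $B$-separated. By Theorem \ref{T2.2}, $(id, triv)$ is compatible with $(A, B)$. So there is an endomorphism $\gamma$ of $\langle A \cup B\rangle$ that restricts to the identity on $A$ and to the trivial map on $B$.

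The first key step is to show that $\gamma$ maps the whole join into $A$. Any $g \in \langle A \cup B\rangle$ can be written as a product $\prod_{i=1}^{n} a_i b_i$ with $a_i \in A$ and $b_i \in B$; some factors may be $e$, and inverses are already in $A$ or $B$. By Proposition \ref{P2.1},
$$\gamma(g) = \prod_{i=1}^{n} \text{id}(a_i)\,\text{triv}(b_i) = a_1 a_2 \cdots a_n,$$
which lies in $A$. Hence $\gamma(\langle A \cup B\rangle) \subseteq A$.

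Next, take $a_1, a_2 \in A$ that are conjugate in the join, say $a_2 = g a_1 g^{-1}$ for some $g \in \langle A \cup B\rangle$. Apply $\gamma$ to both sides. Since $\gamma$ fixes $a_1$ and $a_2$, this gives $a_2 = \gamma(g)\, a_1\, \gamma(g)^{-1}$. By the previous step $\gamma(g) \in A$, so $a_1$ and $a_2$ are already conjugate in $A$. Therefore no pair $a_1, a_2 \in A$ can be conjugate in the join without being conjugate in $A$, which is the claim.

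I expect the only real point needing care to be the first step, that the image of $\gamma$ lies in $A$. This relies on every element of the join having a representation as an alternating product of elements of $A$ and $B$, together with Proposition \ref{P2.1}. Once that is in place, the conjugation argument is immediate. In effect, $\gamma$ is a retraction of the join onto $A$, and conjugacy is preserved under retractions.
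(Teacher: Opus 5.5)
Your proof is correct and follows the paper's argument. The paper likewise uses Theorem \ref{T2.2} to get the extension of $(id, triv)$, writes the conjugator as $z = a_1b_1\cdots a_nb_n$, and applies that extension to show the two elements are conjugate by $a_1\cdots a_n \in A$; your direct version, which isolates the fact that the extension is a retraction of $\langle A \cup B\rangle$ onto $A$, is just a cleaner packaging of the same idea (and it avoids the paper's reuse of $a_1$ both as the conjugated element and as a factor of $z$).
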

			
			\begin{proof}
				Suppose that $A$ is $B$-separated. Assume to get a contradiction that $\exists \ a_1, a_2 \in A$ such that $a_1$ and $a_2$ are not conjugates in $A $ but are conjugates in $\langle  A \cup B\rangle $.\\
				
				Then $\exists \ z = a_1b_1 \dots a_nb_n  \in \  \langle  A \cup B\rangle $ such that, $ a_1 = z a_2 z^{-1}$.\\
				
				That is, $a_1 = (a_1b_1 \dots a_nb_n)a_2 (b_n^{-1}a_n^{-1} \dots b_1^{-1}a_1^{-1})$.\\
				
				Since $A$ is $B$-separated, then by Theorem \ref{T2.2},  $(id, triv)$ is compatible with $(A, B)$.  Therefore, \\
				
				\noindent $\text{id}(a_1) = (\text{id}(a_1)\text{triv}(b_1) \dots \text{id}(a_n)\text{triv}(b_n)) \text{id}(a_2) (\text{triv}(b_n^{-1})\text{id}(a_n^{-1}) \dots \text{triv}(b_1^{-1})\text{id}(a_1^{-1}))$.\\
				
				\noindent $\implies a_1 = (a_1\dots a_n) a_2(a_n^{-1} \dots a_1^{-1}) = (a_1\dots a_n) a_2(a_1\dots a_n) ^{-1} \in A.$\\
				
				This contradicts the assumption that $a_1$ and $a_2$ are not conjugates in $A$.
			\end{proof}

			This gives us the following result which states that if $A$ and $B$ are independent, then conjugacy classes in $A$ cannot be joined in  $\langle  A \cup B\rangle $. Similarly, for conjugacy classes in $B$.

			\begin{theorem}\label{ConjCheck}
				$A \indep B \implies$\\
				
				\noindent (i) 
				$ \not\exists \ a_1, a_2 \in A$ such that $a_1$ and $a_2$ are not conjugates in $A$ but are conjugates in $\langle  A \cup B\rangle $ and \\
				
				\noindent (ii)	
				$ \not\exists \ b_1, b_2 \in A$ such that $b_1$ and $b_2$ are not conjugates in $B$ but are conjugates in $\langle  A \cup B\rangle $.
				
			\end{theorem}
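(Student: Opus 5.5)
The plan is to derive Theorem \ref{ConjCheck} directly from Theorem \ref{mainTheorem} and Lemma \ref{ConjLemma}, treating the two parts symmetrically. Assume $A \indep B$. By Theorem \ref{mainTheorem}, $A$ is $B$-separated and $B$ is $A$-separated. For part (i), I will apply Lemma \ref{ConjLemma} to the pair $(A,B)$. Since $A$ is $B$-separated, the lemma says no two elements of $A$ are non-conjugate in $A$ yet conjugate in $\langle A \cup B\rangle$, which is exactly (i).

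For part (ii), I will note that Lemma \ref{ConjLemma} is symmetric in the roles of $A$ and $B$. The join $\langle A \cup B\rangle = \langle B \cup A\rangle$ does not depend on the order. Theorem \ref{T2.2} also has the symmetric version: $B$ is $A$-separated if and only if $(triv, id)$ is compatible with $(A,B)$, proved the same way. So I will apply the lemma with $A$ and $B$ swapped, using that $B$ is $A$-separated. The statement writes ``$b_1, b_2 \in A$'' in (ii); I will read this as the intended $b_1, b_2 \in B$ and say so.

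The only point that needs care is the proof of Lemma \ref{ConjLemma}, which I rely on. There the conjugating element is written $z = a_1 b_1 \cdots a_n b_n$, and the letter $a_1$ is reused for both an element being conjugated and the first factor of $z$. In the symmetric application I will rename the factors of $z$, say $z = c_1 d_1 \cdots c_n d_n$ with $c_i \in A$ and $d_i \in B$. Applying the extension $\gamma$ of $(id, triv)$ to $a_1 = z a_2 z^{-1}$ then gives
$$a_1 = (c_1 \cdots c_n)\, a_2\, (c_1 \cdots c_n)^{-1}.$$
This exhibits a conjugating element $c_1 \cdots c_n$ inside $A$, which contradicts the assumption that $a_1$ and $a_2$ are not conjugate in $A$. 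In the swapped case, the extension of $(triv, id)$ gives
$$b_1 = (d_1 \cdots d_n)\, b_2\, (d_1 \cdots d_n)^{-1},$$
a conjugation inside $B$. Nothing else is required, so I expect no real obstacle beyond checking this symmetry.
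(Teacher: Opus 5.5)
Your proposal is correct and follows the paper's proof. Theorem~\ref{mainTheorem} gives that $A$ is $B$-separated and $B$ is $A$-separated, and Lemma~\ref{ConjLemma}, applied once as stated and once with $A$ and $B$ swapped, yields (i) and (ii); your renaming of the factors of $z$ and your reading of (ii) as ``$b_1,b_2\in B$'' are sensible clean-ups of the paper's notation. (As a minor aside, the detour through separatedness is not needed, since $A \indep B$ already makes $(\text{id}, \text{triv})$ and $(\text{triv}, \text{id})$ compatible by definition, and that is all the conjugation argument uses.)
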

			
			\begin{proof}
				As $A \indep B \implies$ implies that $A$ is $B$-separated and that $B$ is $A$-separated (by Theorem \ref{mainTheorem}), by applying Lemma \ref{ConjLemma} we get the result.
			\end{proof}

			\noindent This can be  useful for deciding non-independence as witnessed by the following example.

			\begin{example}
				\normalfont
				Let $A$ and $B$ be subgroups of $S_4$, the symmetric group on 4 elements defined by $A = \{e, (12), (34), (12)(34)\}$ and $B = \{e, (1234), (1432), (13)(24)\}$. It is easy to check that, $(12)$ and $(34)$ are not conjugates in $A$. Now, $\langle  A \cup B\rangle  = S_4$ since it is well known that $(12)$ and $(12\dots n)$ generates $S_n$. However, $(12)$ and $(34)$ \textit{are} conjugates in $\langle  A \cup B\rangle = S_n$ since they have the same cycle structure. Therefore, $A$ cannot be independent to $B$.
			\end{example}

		\end{subsection}

	\end{section}
	
	\begin{section}{Further Connections following from Subgroup Separateness}
		
		\begin{subsection}{Factoring the Join of Two Subgroups by One of the Subgroups}
			
			If $A$ is independent to $B$ we might expect that factoring their join by one group would give back  the other (and not something smaller)  as their join is the minimal group containing $A$ and $B$. Since, we cannot technically factor by a non-normal subgroup, what we will find, using the second homomorphism theorem, is that if their join is factored by the minimal normal subgroup of one, then that quotient group will be isomorphic to the other.  We will see that each subgroup being separated from each other is enough to get this result.\\
			
			\noindent We need following  known result in group theory, see \cite{B2}.
			
			\begin{proposition}\label{P2.5}
				Let $H$ and $N$ be subgroups of a group where $N$ is normal in $G$. Then $\langle  N \cup H\rangle  = N \cdot H$ and so $N \cdot H= H\cdot N$.

			\end{proposition}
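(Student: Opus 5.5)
The plan is to show directly that $N \cdot H$ is a subgroup of $G$ containing $N \cup H$ and contained in every subgroup that contains $N \cup H$. It is then the join $\langle N \cup H\rangle$. After that we deduce $N\cdot H = H \cdot N$, either by the same argument with the roles reversed or by inverting elements.

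First, the containments. Every element $nh$ with $n \in N$, $h \in H$ is a product of elements of $N \cup H$, so $N \cdot H \subseteq \langle N \cup H\rangle$. Conversely, $N = N \cdot \{e\}$ and $H = \{e\} \cdot H$ both lie in $N \cdot H$. So once $N\cdot H$ is known to be a subgroup, minimality of the join gives $\langle N \cup H\rangle \subseteq N \cdot H$.

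The main step, where normality of $N$ is used, is checking that $N\cdot H$ is a subgroup. It contains $e = ee$. For closure under products, take $n_1h_1, n_2h_2$ and rewrite
\[ (n_1h_1)(n_2h_2) = n_1 (h_1 n_2 h_1^{-1}) h_1 h_2 . \]
Here $h_1 n_2 h_1^{-1} \in N$ because $N$ is normal, so the product lies in $N \cdot H$. For inverses, write
\[ (nh)^{-1} = h^{-1}n^{-1} = (h^{-1} n^{-1} h) h^{-1} , \]
which lies in $N\cdot H$ again by normality. Hence $N \cdot H$ is a subgroup, and $\langle N\cup H\rangle = N\cdot H$.

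For the final equality, the identity $hn = (hnh^{-1})h$ gives $H \cdot N \subseteq N \cdot H$. Symmetrically, $nh = h(h^{-1}nh)$ gives $N\cdot H \subseteq H\cdot N$. Alternatively, $N \cdot H$ is a subgroup, so it is closed under inversion, and inverting $N\cdot H$ yields exactly $H^{-1}N^{-1} = H \cdot N$.

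There is no serious obstacle. The only care needed is to apply normality of $N$ in $G$, or equivalently in the join, at the conjugation steps. This is the standard second-isomorphism-theorem setup, so one could also simply cite \cite{B2}.
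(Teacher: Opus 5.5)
Your proof is correct. The paper gives no argument of its own for this proposition; it treats it as a standard fact and defers to its reference. Your direct verification supplies the textbook argument that the citation stands in for: $N\cdot H$ contains $N\cup H$, lies inside the join, and is closed under products and inverses via $h_1n_2h_1^{-1}\in N$ and $h^{-1}n^{-1}h\in N$.

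One wording point. Normality of $N$ in $G$ and normality of $N$ in the join are not equivalent hypotheses, since the first implies the second but not conversely. Your computations only use $hNh^{-1}\subseteq N$ for $h\in H$, so either hypothesis suffices, and it is better to say that. This matters because the corollary immediately following applies the proposition to $N_A$ and $N_B$, which are only known to be normal in $\langle A\cup B\rangle$.
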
	
			
			\begin{corollary}\label{C2.5}
				
				Let $A$ and $B$ be subgroups. Then, $\langle  A \cup B\rangle  \ = N_A\rangle  \cdot B =  B \cdot N_A \ = \  N_B \cdot A = A \cdot  N_B $.
				
			\end{corollary}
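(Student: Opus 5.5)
The plan is to apply Proposition \ref{P2.5} twice, each time with one of the minimal normal subgroups $N_A$ or $N_B$ playing the role of the normal subgroup $N$, and with $G = \langle A \cup B\rangle$ as the ambient group. Throughout, $N_A$ and $N_B$ are taken to be the normal closures of $A$ and $B$ inside $G$. By definition, $N_A = \langle Conj(A)_G\rangle$ is normal in $G$ and contains $A$; likewise $N_B$ is normal in $G$ and contains $B$.

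For the first pair of equalities, take $N = N_A$ and $H = B$ in Proposition \ref{P2.5}. This gives $\langle N_A \cup B\rangle = N_A \cdot B = B \cdot N_A$, so it remains to identify $\langle N_A \cup B\rangle$ with $G$. For one inclusion, $A \subseteq N_A$, so $A \cup B \subseteq N_A \cup B$ and hence $G \subseteq \langle N_A \cup B\rangle$. For the other, $N_A \subseteq G$ because every conjugate of an element of $A$ by an element of $G$ lies in $G$, and $B \subseteq G$; since $G$ is a subgroup, $\langle N_A \cup B\rangle \subseteq G$. Together these give $G = N_A \cdot B = B\cdot N_A$.

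The second pair of equalities follows symmetrically. Taking $N = N_B$ and $H = A$, the same two inclusions give $G = N_B \cdot A = A \cdot N_B$.

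There is no real obstacle; the only point needing care is that normality of $N_A$ is only asserted inside the join $G$. This is exactly why the ambient group in Proposition \ref{P2.5} must be taken to be $G$ rather than any larger group containing $A$ and $B$.
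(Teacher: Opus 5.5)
Your proof is correct and takes the same approach as the paper, which simply declares the corollary immediate from Proposition \ref{P2.5} applied inside the join. You supply the one detail the paper leaves implicit: $\langle N_A \cup B\rangle = \langle A \cup B\rangle$ because $A \subseteq N_A \subseteq \langle A \cup B\rangle$, and symmetrically for $N_B$.
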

			
			\begin{corollary}\label{C2.5}
				
				Let $A$ and $B$ be subgroups. Then, $\langle  A \cup B\rangle  \ = N_A  \cdot B =  B \cdot N_A \ = \ N_B  \cdot A = A \cdot N_B\rangle $.
				
			\end{corollary}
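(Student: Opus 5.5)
The claim is $\langle A \cup B\rangle = N_A \cdot B = B \cdot N_A = N_B \cdot A = A \cdot N_B$. I would derive it directly from Proposition \ref{P2.5}, applied inside the ambient group $G = \langle A \cup B\rangle$.

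First, check the hypothesis of Proposition \ref{P2.5}. By definition $N_A = \langle Conj(A)_G\rangle$ with $G = \langle A\cup B\rangle$, and $N_A$ is normal in $G$. Conjugating a product of conjugates of elements of $A$ by any $g \in G$ gives another such product, so $gN_Ag^{-1} \subseteq N_A$. Moreover $A \subseteq N_A \subseteq G$. Proposition \ref{P2.5} with $N = N_A$ and $H = B$ then gives
\[
\langle N_A \cup B\rangle = N_A \cdot B = B \cdot N_A .
\]

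Second, identify $\langle N_A \cup B\rangle$ with the join. Since $A \subseteq N_A$, we have $\langle A \cup B\rangle \subseteq \langle N_A \cup B\rangle$. Since $N_A \subseteq G$ and $B \subseteq G$, we also have $\langle N_A \cup B\rangle \subseteq G = \langle A\cup B\rangle$. So the two are equal, which gives $\langle A \cup B\rangle = N_A \cdot B = B\cdot N_A$.

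The equalities $\langle A\cup B\rangle = N_B\cdot A = A \cdot N_B$ follow from the same argument with the roles of $A$ and $B$ exchanged.

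There is no serious obstacle here. The only point needing care is that $N_A$ is taken as normal closure in the join, not in some larger ambient group, so that both $N_A \subseteq \langle A\cup B\rangle$ and the normality required by Proposition \ref{P2.5} hold.
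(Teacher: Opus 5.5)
Your proposal is correct and follows the paper's route: the paper says only that the corollary is immediate from Proposition \ref{P2.5}. You supply the details that step leaves implicit, namely that $N_A$ is normal in the join and that $\langle N_A \cup B\rangle = \langle A \cup B\rangle$ because $A \subseteq N_A \subseteq \langle A \cup B\rangle$.
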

			
			\begin{proof}
				Immediate from Proposition \ref{P2.5}
			\end{proof}

			\begin{lemma}\label{FactoringL}
				Let $A$ and $B$ be subgroups. Then, \\
				
				$A$ is $B$-separated $\implies$ 	\large{$\frac{\langle  A\cup B\rangle }{N_B } \cong A$} 
				
			\end{lemma}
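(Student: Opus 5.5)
We plan to use the second isomorphism theorem together with the factorisation in Corollary \ref{C2.5}. Write $G = \langle A \cup B\rangle$. Since $N_B$ is normal in $G$ by definition (it is the minimal normal subgroup of $G$ containing $B$), Corollary \ref{C2.5} gives $G = A \cdot N_B$.

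The key step is to apply the second isomorphism theorem to the subgroup $A$ and the normal subgroup $N_B$ of $G$. This yields
\[ \frac{A \cdot N_B}{N_B} \cong \frac{A}{A \cap N_B}. \]
Substituting $A \cdot N_B = G$ from Corollary \ref{C2.5}, the left-hand side becomes $G/N_B$.

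Next we invoke the hypothesis. By the definition of separatedness, $A$ being $B$-separated means exactly $A \cap N_B = \{e\}$. Hence the right-hand side is $A/\{e\} \cong A$, and we conclude $G/N_B \cong A$.

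We could also make the isomorphism explicit, which serves as a sanity check. The map $A \to G/N_B$, $a \mapsto aN_B$, is surjective because every element of $G$ has the form $a n$ with $n \in N_B$. Its kernel is $A \cap N_B = \{e\}$. Alternatively, via Theorem \ref{T2.2}, the extension $\gamma$ of $(\text{id}, \text{triv})$ is a retraction $G \to A$ whose kernel contains $N_B$. We do not expect any real obstacle here. The only point needing care is that the product form $G = A \cdot N_B$ holds, which is exactly Proposition \ref{P2.5} applied with $H = A$ and $N = N_B$.
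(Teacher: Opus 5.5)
Your proposal is correct and follows the paper's proof exactly: apply the second isomorphism theorem to $A$ and the normal subgroup $N_B$, rewrite $A \cdot N_B$ as $\langle A \cup B\rangle$ via Corollary \ref{C2.5}, and use $A \cap N_B = \{e\}$. Your explicit map $a \mapsto aN_B$ is a valid way of unpacking the same argument.
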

			
			\begin{proof}
				Applying, $A$ and $N_B $  in the second homomorphism theorem with $N_B $ being the normal subgroup, we get that, \\
				
				{\large	$\frac{A \cdot N_B }{N_B} \cong \frac{A}{A \cap N_B }$.}\\
				
				Since $A$ being $B$-separated means that $A \  \cap N_B  = \{e\}$ and using that $\langle  A \cup B\rangle  = N_B  \cdot A$ from Corollary \ref{C2.5}, we get that,
				{\large $\frac{\langle  A\cup B\rangle }{N_B } \cong \frac{A}{\{e\}} \cong A$}. Similarly, we can  show that, if $B$ is $A$-separated then,
				$\frac{\langle  A\cup B\rangle }{N_A} \cong B.$
			\end{proof}	
			
			\begin{theorem}\label{FactoringT}
				Let $A$ and $B$ be subgroups. Then, \\
				
				$A \indep B$ $\implies$ 	\large{$\frac{\langle  A\cup B\rangle }{N_B} \cong A$} \normalsize{and}  \large{$\frac{\langle  A\cup B\rangle }{N_A } \cong B$.}
				
			\end{theorem}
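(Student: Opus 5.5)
The plan is to reduce the statement to Theorem \ref{mainTheorem} and Lemma \ref{FactoringL}; no new construction is needed. First, assume $A \indep B$. By Theorem \ref{mainTheorem}, independence forces both separatedness conditions, $A \cap N_B = \{e\}$ and $B \cap N_A = \{e\}$. In the paper's language, $A$ is $B$-separated and $B$ is $A$-separated.

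Next, apply Lemma \ref{FactoringL} to each condition in turn. Since $A$ is $B$-separated, the lemma gives $\frac{\langle A\cup B\rangle}{N_B} \cong A$. For the second isomorphism, use the symmetric half of the same lemma with the roles of $A$ and $B$ interchanged. Explicitly, the second homomorphism theorem applied to $B$ and the normal subgroup $N_A$ of $\langle A \cup B\rangle$ gives
\[
\frac{B \cdot N_A}{N_A} \cong \frac{B}{B \cap N_A}.
\]
By Corollary \ref{C2.5}, $B \cdot N_A = \langle A \cup B\rangle$, and $B$-side separatedness gives $B \cap N_A = \{e\}$. Together these yield $\frac{\langle A\cup B\rangle}{N_A} \cong B$.

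There is no real obstacle. The one point to check is that $N_A$ and $N_B$ are, by the paper's convention, the normal closures taken inside the join $\langle A\cup B\rangle$. This ensures they are normal subgroups of the group being factored, so the quotients make sense and the second homomorphism theorem applies. One could also note that only the necessary conditions of Theorem \ref{mainTheorem} are used, so the conclusion actually holds under the weaker hypothesis of mutual separatedness. That is precisely the content of Lemma \ref{FactoringL}, and the theorem is simply its specialization via Theorem \ref{mainTheorem}.
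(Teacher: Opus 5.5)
Your proposal is correct and follows the paper's proof exactly: the paper also just combines Theorem \ref{mainTheorem} (independence implies $A \cap N_B = \{e\}$ and $B \cap N_A = \{e\}$) with Lemma \ref{FactoringL} (the second isomorphism theorem together with $\langle A\cup B\rangle = A\cdot N_B = B \cdot N_A$). Writing out the symmetric case for $N_A$ is a fine addition, since the lemma's proof covers that case only with ``similarly''.
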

			
			\begin{proof}
				
				Applying Theorem \ref{mainTheorem} and Lemma \ref{FactoringL}, we get the result.
			\end{proof}
			
			\noindent If $A$ and $B$ are finite subgroups if a group, then we have the following converse to Lemma \ref{FactoringL}
			
			\begin{theorem}\label{T3.2}
				Suppose that  $A$ and $B$ are finite subgroups of a group. Then, \\
				
				{\large $\frac{\langle  A\cup B\rangle }{N_B } \cong A$} $\implies$  $A$ is $B$-separated.
				
			\end{theorem}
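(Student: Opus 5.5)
Apply the second isomorphism theorem exactly as in Lemma \ref{FactoringL}. By Corollary \ref{C2.5} we have $\langle A\cup B\rangle = A\cdot N_B$, and $N_B$ is normal in the join. Hence
\[
\frac{\langle A\cup B\rangle}{N_B} \cong \frac{A}{A\cap N_B}
\]
holds for arbitrary subgroups $A,B$, with no separatedness hypothesis. The hypothesis then says $A/(A\cap N_B)\cong A$, and we need to conclude $A\cap N_B=\{e\}$.

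Finiteness does all the work here. Since $A$ is finite, Lagrange's theorem gives
\[
|A| = \left|\frac{A}{A\cap N_B}\right| \cdot |A\cap N_B|.
\]
An isomorphism $A/(A\cap N_B)\cong A$ forces $|A/(A\cap N_B)|=|A|$. Substituting, $|A\cap N_B|=1$, so $A\cap N_B=\{e\}$, which is exactly the statement that $A$ is $B$-separated.

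The only point needing care is checking that Corollary \ref{C2.5} and the second isomorphism theorem apply in full generality. The argument above never uses separatedness, unlike the way Lemma \ref{FactoringL} was phrased. The finiteness of $B$ is not really needed, only that of $A$.

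There is no real obstacle. It is still worth noting why finiteness is essential: an infinite group can be isomorphic to a proper quotient of itself, for example a free group of countably infinite rank or $\mathbb{Z}^\infty$ modulo a factor. So the cardinality comparison is the crucial step, and it fails without the finiteness assumption.
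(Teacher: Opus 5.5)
Your proof is correct and follows essentially the same argument as the paper. Both use the second isomorphism theorem (with $\langle A\cup B\rangle = A\cdot N_B$) to identify $\langle A\cup B\rangle/N_B$ with $A/(A\cap N_B)$, and then use the finiteness of $A$ to compare orders; the paper phrases this as a contrapositive, showing $|A/(A\cap N_B)|<|A|$ whenever $A\cap N_B\neq\{e\}$, and your remark that only finiteness of $A$ is needed is accurate.
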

			
			\begin{proof}
				We use proof by contrapositive. Suppose that $A$ is not $B$-separated. Then, $A \ \cap  N_B  \not= \{e\}$ and $|A \ \cap  N_B | \rangle  1$ and so {\large$ |\frac{A}{A \cap  N_B}| \langle |A|$}. Therefore, {\large$\frac{A}{A \cap  N_B } \not \cong A$}. Since, {\large	$\frac{\langle A\cup B\rangle }{N_B } \cong \frac{A \cdot  N_B }{N_B } \cong \frac{A}{A \cap  N_B }$}, we get that {$\large \frac{\langle A\cup B\rangle }{N_B\rangle } \not \cong A$.}\\
			\end{proof}
			
			\noindent \textbf{Question}: \textit{Is this result still true if we consider all groups (infinite ones as well)?}
			
		\end{subsection}

		\begin{subsection}{Element-Writing Independence}
			
			In \cite{MainC2}, Rosenmann defined an element $g \in G$ as dependent on a subgroup $H$ of $G$ if $g$ can be written as a non-identity product of elements of $H$. Relatedly, note that $A$ being $B$-separated means that elements of $A$ cannot be written as a non-trivial combination of not only elelments in $B$ but elements in $N_B\rangle $ as $A \ \cap N_B = \emptyset$. While $A$ being $B$-separated and $B$ being $A$-separated  are necessary conditions for their independence by Theorem \ref{mainTheorem},  Example \ref{mainEg2} given in the  next section, shows that this is not sufficient to conclude that $A\indep B$. 
			
			Following this direction, \textbf{\textit{independent sets}} in a group will be defined as a set of elements such that no element can be written in terms of the other elements of the set:

			\begin{definition}
				Let $G$ be a group and $G' \subseteq G$. Then $G'$ is said to be an\textbf{ independent set} of $G$, if there does not exist $g_k \in G'$ which is such that $g_k= \prod g'_i$ where $g'_i \in G' - \{g_k\}$ for all $i$.
			\end{definition}
			
			\noindent Next, we give a result which shows that if $A$ and $B$ are independent subgroups, then the union of two independent subsets of these, one belonging to $A$ and the other to $B$, forms an independent set. 
			\begin{proposition}
				$A \indep B \implies$ \\
				if  $A' \subseteq A$, $B' \subseteq B$  are independent sets then  $A' \cup B'$ is an independent set.
			\end{proposition}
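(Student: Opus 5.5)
The plan is to argue by contradiction, using the $(id, triv)$ and $(triv, id)$ endomorphism pairs, which are compatible with $(A,B)$ because $A \indep B$. Suppose $A \indep B$, and let $A' \subseteq A$ and $B' \subseteq B$ be independent sets. Assume $A' \cup B'$ is not an independent set. Then there are some $g_k \in A' \cup B'$ and elements $g'_1, \dots, g'_m \in (A' \cup B') - \{g_k\}$ with $g_k = \prod_{i=1}^m g'_i$.

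By symmetry (swapping the roles of $A,B$ and of $(id, triv)$, $(triv, id)$), we may assume $g_k \in A'$. Since $A \indep B$, the pair $(id, triv)$ has an extension $\gamma$ to an endomorphism of $\langle A \cup B\rangle$. We apply $\gamma$ to the equation $g_k = \prod_{i=1}^m g'_i$. Each factor $g'_i$ lies in $A'$ or in $B'$:
\begin{itemize}
\item factors in $A'$ are fixed by $\gamma$, since $\gamma$ restricts to id on $A$;
\item factors in $B'$ are sent to $e$, since $\gamma$ restricts to triv on $B$.
\end{itemize}
A factor lying in both $A'$ and $B'$ lies in $A \cap B$, which is $\{e\}$ by Corollary \ref{C2.1}, so the two rules agree there. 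Hence
\[ g_k = \gamma(g_k) = \prod_{g'_i \in A'} g'_i , \]
where the product keeps the original order of the factors. Every factor on the right lies in $A' - \{g_k\}$, so $g_k$ is expressed as a product of elements of $A' - \{g_k\}$. This contradicts the assumption that $A'$ is an independent set, and the case $g_k \in B'$ is identical using $(triv, id)$. The argument is essentially the same mechanism as in Proposition \ref{P2.1} and Lemma \ref{L2.1}.

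The main obstacle is the degenerate case where, after deleting the $B'$ factors, no factors remain. Then the displayed equation gives $g_k = e$, and we need this to contradict the independence of $A'$. I would settle this through the convention implicit in the definition: the empty product equals $e$. Under that convention $e$ is always a product of elements of $G' - \{e\}$, so an independent set can never contain $e$. Therefore $g_k = e \in A'$ is itself impossible, and the degenerate case is also a contradiction. If instead the author intends non-empty products only, I would add to the statement the harmless hypothesis $e \notin A' \cup B'$, which makes $g_k = e$ immediately contradictory. With either reading, the remaining steps are routine.
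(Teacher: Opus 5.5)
Your proof is correct and is essentially the paper's argument: extend $(id, triv)$ to the join, apply it to the offending product so that every $B'$-factor collapses to $e$, and contradict the independence of $A'$. It is more careful than the paper's version, which assumes an alternating form $\prod a_i b_i$ and never considers the case where no $A'$-factor survives. Your treatment of that case is genuinely needed, because if only non-empty products are allowed the statement fails: take $\mathbb{Z}\times\mathbb{Z}_2$ with $A=\mathbb{Z}\times\{0\}$ and $B=\{0\}\times\mathbb{Z}_2$; then $A'=\{e,(1,0)\}$ and $B'=\{(0,1)\}$ are independent sets, but $e=(0,1)^2$ shows $A'\cup B'$ is not.
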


			\begin{proof}
				By contradiction. Assume that $A \indep B$ and  there exists $A' \subseteq A$, $B' \subseteq B$  which are independent sets but   $A' \cup B'$ is not an independent set.\\
				
				Then without loss of generality, there exists $a_k \in A' \cup B'$ and $a_k \in A'$ is such that  $a_k = \prod a_ib_i$ where $a_i \in A' - \{a_k\}$ and $b_i \in B'$ for all $i$. \\
				
				Consider the homomorphisms, $(id, triv)$ on $(A, B)$. Since $A \indep B$, a homomorphism extension of this exists and hence,
				
				$\text{id}(a_k) = \prod\text{id}(a_i)\text{triv}(b_i)$.\\
				$\implies\text{id}(a_k) = \prod \text{id}(a_i)$.\\
				$\implies a_k = \prod a_i$.
				
				Since $a_i \in A' - \{a_k\}$, this contradicts that $A'$ is an independent set.
			\end{proof}

		\end{subsection}
		
	\end{section}
	
	\begin{section}{The Open Problem of Subgroup Independence}

		Despite the nice connections discovered from the notion of a subgroup being separated from another, it was somewhat surprising that  it was found to not be strong enough to characterise subgroup independence. The following counterexample illustrates that while two groups being mutually separated from each other is necessary for them to be independent, it is not sufficient (this example is a modification of Example \ref{mainEg}):
		
		\begin{example}\label{mainEg2}
			\normalfont
			Let $A = \{e, (12)\} \cdot \{e, (56)\} =  \{ e, (1 2), (5 6), (1 2 )(5 6)  \}$ and \\ $B  = \{ e, (1 3) (2 4)\}$ which are subgroups of the symmetric group $S_6$.\\ Then $\langle A \cup B\rangle  = \{ e, (1 2), (3 4) , (1 2)(3 4), (1 3)(2 4), (1 4)(2 3), (1324), (1423)  \} \cdot \{e, (5 6)\}$. Now,\\
			$N_A  = \{e, (12), (34), (12)(34)\} \cdot \{e, (5 6)\}$ and\\
			$N_B  = \{ e, (1 3)(24), (12)(34), (14)(23)\}.$\\ Notice, that $A$ is $B$-separated and $B$ is $A$-separated, that is;\\
			$A \  \cap  N_B  = \{e\}$ and \\
			$B \  \cap N_A  = \{e\}.$\\
			However, consider the following homomorphisms, $\alpha: A \rightarrow A$ defined by $ \alpha(e) = e, \  \alpha((12)) = (56), \ \alpha((56)) = (12)$ and $\alpha((12)(56)) = (12)(56)$ and $\beta: B \rightarrow B$ where $\beta$ is the identity mapping.	If a join mapping exists, $\gamma$, then since $(56)((13)(24)) =  ((13)(24))(56)$, we should get that $\gamma((56)((13)(24))) = \gamma (((13)(24))(56)) $ which implies that $\alpha(56) \beta((13)(24)) = \beta((13)(24))\alpha(56)$.\\
			However,  	$\alpha(56) \beta((13)(24)) = (12)(13)(24)= (1324)$ whilst, $ \beta((13)(24))\alpha(56) = (13)(24)(12) = (1423)$. Therefore, $\gamma$ does not exist and these homomorphisms are incompatible. Hence, despite $A$ being $B$-separated and $B$ being $A$-separated, we observe that $A$ and $B$ are not independent.
		\end{example} 
		
		\noindent \textbf{Remark 3}: In the above example, we can informally see that while the elements $(12)$ and $(56)$ behave symmetrically from the point of view of $A$, $B$ however, can detect their difference. That is, $B$ "sees" that they are made from different numbers via the element $(13)(24)$. This element commutes with $(56)$ but not with $(12)$. Hence, the automorphism of $A$, $\alpha$, used above which swaps these two elements, cannot coexist with the identity mapping of $B$ in their join, as $B$ does not "view" these elements as indistinguishable. It is for this type of reason that if we consider independence as a relation between groups, then this relation is not invariant upon factoring groups by isomorphism. That is, let $A$ be isomorphic to $A'$ and $B$ be isomorphic to $B'$, then $(A, B)$ being an independent pair does not imply that $(A', B')$ is an independent pair. This can be witnessed by the following counterexample: Let $A = \{e, (12)\}, \ A' = \{e, (13)\}, \ B = \{e, (34) \}, \text{and} \ B'= \{ e, (34)\}$. Then, even though clearly $A \cong A'$, $B \cong B'$ and $(A, B)$ is an independent pair (as the elements between them commute, see Theorem \ref{normalsC}), however $(A', B')$ is not an independent pair (as $|13| \not| \ |(13)(34)| = |(134)|$, see Proposition \ref{OrderTheorem}). This is because isomorphisms lose information about the details of what particularly makes up groups and these details can influence independence. In other words, two groups can "locally" view themselves as the same when considered individually but a larger context (like in their join) can illuminate their differences and these can influence whether or not they affect each other in the wider context.
		
		So we see that that for two subgroups, $A$ and $B$ that satisfying the separateness conditions that $A \  \cap  N_B = \{e\}$ and $B \ \cap N_A  = \{e\}$, is not enough to force independence. At this point, one might be tempted to conjecture that the stronger separateness condition $N_A  \cap  N_B = \{e\} $ gives the characterisation but this still misses the sweet spot as the following considerations show:   Notice that for any $a \in A$ and $b \in B$, $aba^{-1}b^{-1} \in N_A  \cap  N_B $ since $aba^{-1}b^{-1}  = a(ba^{-1}b^{-1}) = (aba^{-1})b^{-1}$. So if  $N_A  \cap  N_B  = \{e\}$, then $ aba^{-1}b^{-1}  = e \implies ab = ba \ \forall \ a \in A \ \text{and} \ b \in B$. By Theorem \ref{normalTheorem} this means that $A$ and $B$ are each normal in their join and therefore independent. However, it is  not the case that  all pairs of independent subgroups are  normal in their join (see Example \ref{mainEg}, where $N_A  \cap  N_B  = \{e, (12)(34)\}$), and so this condition, whilst sufficient to conclude independence, is therefore not necessary for it. 
		
		Overall, we are left with the strange situation with respect to characterising that a subgroup pair  $(A, B)$, is independent:\\
		
		\noindent \textbf{Too loose} (Necessary but not sufficient): \\ $A \  \cap  N_B  = \{e\}$ and $B \  \cap N_A  = \{e\}$.\\
		
		\noindent \textbf{Too tight} (Sufficient but not necessary):\\ $N_A  \cap  N_B  = \{e\}$\\
		
		\noindent This prompts the question: \textit{\textbf{Is there something elegant in between those that would work?}}\\
		
		\noindent Hence, we pose the open problem to the community:\\
		
		\noindent \textbf{Open Problem:}\\ 
		
		\noindent \textit{What is a group-theoretic characterisation of subgroup independence?}\\

	\end{section}

	\begin{section}{What to Do if You Need to Determine if Two Subgroups Affect Each Other} \label{S5}
		
		\textbf{Heuristic algorithm for Detecting if Two Subgroups are Independent:}\\
		
		If given two groups $A$ and $B$, the following steps can be taken to test if they are independent:\\ 
		
		\textbf{1. }Check if there is an non- identity element in $A \cap B$. If yes, then $A$ and $B$ are not independent and you are done. If no, go to step 2.\\

		\textbf{2 (i).} Check if all pairs $a \in A$ and $b \in B$ are such that $ab = ba$. If yes, then $A$ and $B$ are independent and you are done. If no, then go to step 2 (ii).
		
		$\ $\textbf{ (ii).} Check if any of the  non-commuting pairs of $a \in A$ and $b \in B$, found in part (i), is such that $|a|  \not| \ |ab|$. If any such pair is found then, groups $A$ and $B$ are not independent and you are done.\\
		
		\textbf{3.} Calculate $\langle A \cup B\rangle $.  Then,  depending on the subgroups, do the easier of the following steps,  moving on to the next easier step if a decision is not found:\\
		
		\textbf{(i).} Calculate  $N_A $.  Check if there is a non-identity element in $ B \ \cap N_A $. If yes, then you are done and $A$ and $B$ are not independent. \\
		
		\textbf{(ii).} Calculate $N_B $. Check if there is a non-identity element in $ A \ \cap  N_B $. If yes, then you are done and $A$ and $B$ are not independent.\\
		
		\textbf{(iii).}  Check,  if $ \exists \ a_1, a_2 \in A$ such that $a_1$ and $a_2$ are not conjugates in $A$ but are conjugates in $\langle A \cup B\rangle $.\\
		
		\textbf{(iv).} Check if $ \exists \ b_1, b_2 \in B$ such that $b_1$ and $b_2$ are not conjugates in $B$  but are  conjugates $\langle A \cup B\rangle $.\\

		\textbf{4.} If none of the above steps yield a decision, then you are left with checking that the remaining of the endomorphisms of $A$ and of $B$ can be extended to form an endomorphism of their join $\langle A \cup B\rangle $ (if the above steps did not give a decision it means that $(triv, id)$ and $(id, triv)$  are compatible with your subgroups $(A, B)$. Hence, it is the remaining endomorphism pairs that need to be checked). This may have a high computational cost.\\
		
		\begin{proof}
			
			Step 1 is supported by Corollary \ref{C2.1}. Step 2 is supported by Theorem \ref{normalsC} and Proposition \ref{OrderTheorem}. Step 3 is supported by Theorem \ref{mainTheorem} and Theorem \ref{ConjCheck}.
		\end{proof}

		Steps 1 and 2 may be of relatively low computational cost to perform, depending on the groups. Steps 3 starts to become more costly, but again depending on the particular groups, there are many cases where this is still tractable. If steps 1-3 do not give a decision, then we are left to manually  check the remaining endomorphisms pairs at step 4. It is yet to be solved a complete characterisation, which would lower the computational complexity of step 4.  However, a great many cases (see Remark 1) can be decided in steps 1-3. so we hope that this algorithm can be nonetheless helpful. 
		
		Additionally, not only can the considerations in this paper can be used to determine if two groups are independent but it can be used to quickly design independent groups if so desired. As we have seen, simply requiring that $A \cap B =\{e\} $ is not enough. However, if in addition you make sure that the elements $a \in A$ and $b \in B$ commute for all $ a\in A$ and $b \in B$, then that would be enough. Also, if you wanted to design a pair of non-obviously dependent groups (i.e. with $A \cap B = \{e\}$), then giving $A$ and $B$ a non-commuting pair of elements $a \in A$ and $b \in B$ such that $|a| \not| \ |ab|$ would work.
		
		Overall, we hope to motivate further research in the direction of the complete characterisation of subgroup independence. 
		Also, for the scientists who work with groups and subgroups, we give the practical heuristic algorithm to determine when two subgroups affect each other in many cases.
		
	\end{section}
	
	\begin{section}{Acknowledgements}
		
		The author wishes to sincerely thank Zalán Gyenis for introducing the problem and his helpful comments and remarks on this manuscript.	
		
	\end{section}

	\bibliographystyle{plain}
	
	\bibliography{mybib11}
	
	\vspace{3 mm}
	
	\noindent \textsc{Department of Logic, Institute of Philosophy, Eötvös Loránd University, Budapest, Hungary}\\
	\textit{Email address:} \textbf{alexa279e@gmail.com}

\end{document}